\theoremstyle{plain}
\newtheorem{theorem}{Theorem}[section]
\newtheorem{lemma}[theorem]{Lemma}
\newtheorem{proposition}[theorem]{Proposition}
\newtheorem{example}[theorem]{Example}
\theoremstyle{definition}
\newtheorem{definition}[theorem]{Definition}
\newtheorem{remark}[theorem]{Remark}
\newcommand{\mybibitem}[7]{\bibitem[#1]{#1} #2. {\it #3,} #4 {\bf #5}
#6 #7.}
\newcommand{\mybibsix}[6]{\bibitem[#1]{#1} #2. {\it #3,} #4 {\bf #5}
#6.}
\newcommand{\N}{{\mathbb N}}
\newcommand{\R}{{\mathbb R}}
\newcommand{\Rn}{{\R^n}}
\newcommand{\Z}{{\mathbb Z}}
\newcommand{\al}{\alpha}
\newcommand{\eps}{\varepsilon}
\newcommand{\teta}{\vartheta}
\newcommand{\vrho}{\varrho}
\newcommand{\pip}{\varphi}
\newcommand{\Om}{\Omega}
\newcommand{\cE}{\mathcal{E}}
\newcommand{\dmu}{d\mu}
\DeclareMathOperator{\Capa}{Cap}
\newcommand{\scal}[2]{\langle #1,#2 \rangle}
\DeclareMathOperator{\Lip}{Lip}
\newcommand{\DMd}{\mathcal{DM}^2}
\newcommand{\DMi}{\mathcal{DM}^\infty}
\def\vint_#1{\mathchoice%
  {\mathop{\kern 0.2em\vrule width 0.6em height 0.69678ex depth
      -0.58065ex \kern -0.8em \intop}\nolimits_{\kern -0.4em#1}}%
  {\mathop{\kern 0.1em\vrule width 0.5em height 0.69678ex depth
      -0.60387ex \kern -0.6em \intop}\nolimits_{#1}}%
  {\mathop{\kern 0.1em\vrule width 0.5em height 0.69678ex depth
      -0.60387ex \kern -0.6em \intop}\nolimits_{#1}}%
  {\mathop{\kern 0.1em\vrule width 0.5em height 0.69678ex depth
      -0.60387ex \kern -0.6em \intop}\nolimits_{#1}}}
\newcommand{\diver}{\mathop {\rm div}}
\newcommand\res{\mathop{\mbox{\vrule height 7pt width .5pt depth 0pt
      \vrule height .5pt width 6pt depth 0pt}}\nolimits}
\begin{document}

\title{Boundary measures, generalized Gauss--Green formulas, and mean
  value property in metric measure spaces} \author{ Niko Marola,
  Michele Miranda Jr., and Nageswari Shanmugalingam\footnote{N. S. was
    partially supported by NSF grant DMS-0243355, Simons Foundation
    grant~\#200474, and NSF grant DMS-1200915.}}

\date{}
\maketitle

\tableofcontents

\begin{abstract}
  We study mean value properties of harmonic functions in metric
  measure spaces. The metric measure spaces we consider have a
  doubling measure and support a $(1,1)$-Poincar\'e inequality. The
  notion of harmonicity is based on the Dirichlet form defined in
  terms of a Cheeger differentiable structure. By studying fine
  properties of the Green function on balls, we characterize harmonic
  functions in terms of a mean value property. As a consequence, we
  obtain a detailed description of Poisson kernels. We shall also
  obtain a Gauss--Green type formula for sets of finite perimeter
  which posses a Minkowski content characterization of the
  perimeter. For the Gauss--Green formula we introduce a suitable
  notion of the interior normal trace of a regular ball.
\end{abstract}

\noindent 
{\bf 2010 Mathematics Subject Classification}: Primary
31C35; Secondary 31C45, 30C65.  

\medskip

\noindent {\it Keywords\,}: Dirichlet form, doubling measure,
functions of bounded variation, Gauss--Green theorem, Green function,
harmonic function, metric space, Minkowski content, Newtonian space,
perimeter measure, Poincar\'e inequality, singular function.

\section{Introduction}

Solving the Dirichlet problem on a smooth domain in $\R^n$ is
equivalent to constructing harmonic measure on the boundary of the
domain. More precisely, it is known that the classical harmonic
measure can be expressed in terms of a Poisson kernel which is given
by the Radon--Nikodym derivative of harmonic measure with respect to
the Hausdorff boundary measure; that is
\[
P(x,y)=\frac{d\nu_{x}}{d\mathcal{H}^{n-1}}(y).
\]

In general metric measure spaces with a doubling measure and a
Poincar\'e inequality the Dirichlet problem has been solved for
Sobolev type boundary data in \cite{Sh2}, and also for all continuous
boundary values in \cite{BBS}. In \cite{BBS} the authors provide an
integral representation for the solution to the Dirichlet problem, and
hence extend the solvability to $L^1$ boundary data. In this general
setting, however, due to lack of a natural choice of boundary
Hausdorff measure one has to replace a Poisson kernel with a Poisson
kernel-like object for which
\[
P_{x_0}(x,y)=\frac{d\nu_{x}}{d\nu_{x_0}}(y).
\]
It was shown in \cite{BBS} that for a fixed
$x_0\in \Om$, where $\Om$ is a bounded open subset of $X$, there exists
a Radon measure $\nu_{x_0}$ concentrated on $\partial \Om$,
i.e. $\nu_{x_0}$ is a harmonic measure on $\partial\Om$ evaluated at
$x_0$, and a real-valued function $P_{x_0}$ on $\Om\times\partial\Om$
such that whenever $f\in L^1(\partial\Om,\nu_{x_0})$ the following
expression for the harmonic extension $H_f$ is valid:
\[
H_f(x)=\int_{\partial \Om}f(y)P_{x_0}(x,y)\, d\nu_{x_0}(y),
\]
and moreover, for each $y\in\partial\Om$ the function
$P_{x_0}(\cdot,y)$ is harmonic in $\Om$.
  
Our main objective is to find a relationship between the Poisson
kernel that generates solutions to the Dirichlet problem in terms of
Cheeger differentiable structure, and the perimeter measure of a ball
of finite perimeter in metric measure spaces. Our framework is a
complete geodesic metric measure space with a doubling Borel measure, and we
moreover assume that the space supports a $(1,1)$-Poincar\'e inequality. These
conditions are discussed in detail in Section~\ref{sect:Prel}. We
shall describe the Poisson kernel in terms of an analog of a normal
derivative of the Green function at the boundary.

We also study divergence-measure fields along the lines of
Ziemer~\cite{Z} in this general context. We consider an $L^2$-vector
field, $\vec F$, from a metric measure space $X$ to $\R^k$ for which
$\diver \vec F$ is a real-valued signed Borel measure with finite mass. 

To investigate divergence-measure fields we shall provide a
meaningful definition for the divergence operator in metric measure
spaces. We then generalize some results obtained in \cite{Z} to the
metric setting. In particular, we obtain the Gauss--Green type
integration by parts formula for sets of finite perimeter which
possess a Minkowski content characterization of the perimeter. For the
Gauss--Green formula we introduce a suitable notion of the interior
normal trace of a regular ball.

We mention a related paper by Thompson and Thompson~\cite{TT} in which
the authors define divergence and prove an analogue of the
Gauss--Green theorem in Minkowski spaces, i.e. in finite-dimensional
real normed spaces with smooth and strictly convex unit ball.

We use the results for the divergence operator to characterize the Laplace operator
of the Green function on regular balls as the sum of 
the Dirac point mass and a measure concentrated on the boundary of
the ball. 
This characterization allows us to give a precise description of the Poisson kernel defined in \cite{BBS}. In the setting of Heisenberg groups, we explain the relation between this measure and the perimeter measure or the codimension one Hausdorff measure.

\section{Preliminaries}
\label{sect:Prel}

Here we recall some basic definitions and the notation we shall use in this
paper. Our framework is given by a complete metric measure space
$(X,d,\mu)$, where $\mu$ is {\it doubling}, that is, 
there is a constant $c>0$ such that for every ball $B=B_r(x)$, $x\in X$
and $r>0$,
\begin{equation}\label{doubling}
0<\mu(2B)\leq c\, \mu(B)<\infty.
\end{equation}
We write $B_r(x)$ for the ball centered at $x$ with
radius $r>0$, and
$\lambda B=B_{\lambda r}(x)$ for any $\lambda>0$. The
smallest value of $c$ for which (\ref{doubling}) is valid is called
the doubling constant of $X$, and we shall denote it as $c_d$.

An upper gradient for an extended real-valued function $u:X \to
[-\infty,+\infty]$ is a Borel function $g:X\to[0,\infty]$ such that
\begin{equation}\label{upperGradient}
|u(\gamma(0))-u(\gamma(l_\gamma))|\leq \int_\gamma g\, ds
\end{equation}
for every nonconstant compact rectifiable curve $\gamma:[0,l_\gamma]\to X$.
We say that $g$ is a $p$--weak upper gradient of $u$ if
\eqref{upperGradient} holds for $p$--almost every curve; the notion of
$p$--almost every curve is in the sense of the $p$--modulus of a curve
family $\Gamma$ defined as
\[ {\rm Mod}_p(\Gamma)=\inf \left\{\int_X \varrho^p\,d\mu: \varrho\geq
  0 \textrm{ is a Borel function, } \int_\gamma \varrho\, ds \geq 1
  \textrm{ for all } \gamma\in \Gamma\right\}.
\]
If $u$ has an upper gradient in $L^p(X,\mu)$, then it is possible to
prove the existence of a unique minimal $p$--weak upper gradient
$g_u\in L^p(X,\mu)$ of $u$, where $g_u\leq g$ $\mu$-a.e. for every
$p$--weak upper gradient $g\in L^p(X,\mu)$ of $u$. We refer to
\cite{Sh2} for the case $p>1$, and for the case $p=1$ to \cite{Haj03}.

In what follows, the metric space is supposed to support a weak
$(1,1)$--Poincar\'e inequality: there exist constants $c>0$ and
$\lambda\geq 1$ such that for all balls $B_r$ with $B_{\lambda
  r}\subset X$, for any Lipschitz function $f\in \Lip(X)$ and minimal
$p$--weak upper gradient $g_f$ of $f$ we have
\begin{equation} \label{eq:PI}
\vint_{B_r}|f-f_{B_r}|\,d\mu \leq c r \vint_{B_{\lambda r}}
g_f\,d\mu,
\end{equation}
where 
\[
f_{B_r}:= \vint_{B_r}f\, d\mu :=\frac{1}{\mu(B_r)}\int_{B_r} f\,d\mu
\] 
is the integral average of $f$ on $B_r(x)$. 

It is well known that the doubling condition and the Poincar\'e
inequality imply the quasiconvexity of the metric space $X$, see
\cite{K} and \cite{HaKo}. Therefore, up to a bi--Lipschitz change
of the metric, the space $X$ can be assumed to be geodesic, that is,
given $x,y\in X$ there is a curve $\gamma$ with end points $x$, $y$
and length $d(x,y)$. Moreover, for a geodesic space the weak
$(1,1)$--Poincar\'e inequality implies the $(1,1)$--Poincar\'e
inequality, i.e. \eqref{eq:PI} holds with $\lambda=1$. Therefore, as most of
the properties of metric spaces we consider are bi--Lipschitz
invariant, it is not restrictive to assume that {\it $X$ is a geodesic
  space and supports a $(1,1)$--Poincar\'e inequality}.

We remark here that up to Proposition~\ref{prop:TotalVarMeasure}
  assuming only a $(1,2)$--Poincar\'e inequality would
  suffice. However, in Proposition~\ref{prop:TotalVarMeasure}, and
  what follows thereafter, a $(1,1)$--Poincar\'e inequality is needed,
  for instance, to conclude that the minimal 1-weak upper gradient is
  equal $\mu$-a.e. to its pointwise Lipschitz-constant function.

As proved by Cheeger in \cite{C}, in our setting the following
differentiable structure is given. There exists a countable measurable
covering $U_\al$ of $X$, and Lipschitz coordinate charts
$X^\al=(X_1^\al,\ldots,X_{k_\al}^\al):X\to\R^{k_\al}$ such that
$\mu(U_\al)>0$ for each $\al$, $\mu(X\setminus \bigcup_\al U_\al)=0$
and for all $\al$ the following holds: the charts
$(X_1^\al,\ldots,X_{k_\al}^\al)$ are linearly independent on $U_\al$
and $1\leq k_\al\leq N$, where $N$ is a constant depending on the
doubling constant and the constants from the $(1,1)$-Poincar\'e
inequality satisfying the following condition: For any Lipschitz
function $f:X\to\R$ there is an associated unique (up to a set of zero
$\mu$-measure) measurable function $d_\al f:U_\al\to \R^{k_\al}$ for
which the following Taylor-type approximation
\begin{equation}\label{taylorExp}
f(x)=f(x_0)+d_\al f(x_0) \cdot (X_\al(x)-X_\al(x_0))+o(d(x,x_0))
\end{equation}
holds for $\mu$-a.e. $x_0\in U_\al$. 

The previous construction implies, in particular, that for $x\in
U_\al$ there exists a norm $\|\cdot\|_x$ on $\R^{k_\al}$ equivalent to
the Euclidean norm $|\cdot|$, such that $g_f(x)=\|d_\al f(x)\|_x$ for 
almost every $x\in U_\al$. 
Moreover, it is possible to show that there exists a constant $c>1$
such that
\[
\frac{1}{c}g_f(x) \leq |df(x)|\leq cg_f(x)
\]
for all Lipschitz functions $f$ and $\mu$-a.e. $x\in X$. By $df(x)$ we
mean $d_\al f(x)$ whenever $x\in U_\al$. Indeed, one can choose the
cover such that $U_\al\cap U_\beta$ is empty whenever $\al\ne\beta$.

Formula \eqref{taylorExp} implies in particular linearity of the
operator $f\mapsto df$ and also the Leibniz rule $d(fg)=fdg+gdf$ holds
for all Lipschitz functions $f$ and $g$. 

For the definition of the Sobolev spaces $N^{1,p}(X,\mu)$ we will
follow \cite{Sh1}. Since we assume $X$ to satisfy the $(1,1)$-Poincar\'e inequality, the 
Sobolev space $N^{1,p}(X,\mu)$, $1\leq p<\infty$, can also be defined as the closure of the collection of Lipschitz functions on $X$ in the following $N^{1,p}$-norm
\[
\|u\|_{1,p}^p=\|u\|_{L^p(X)}^p+\|g_u\|_{L^p(X)}^p.
\]
The space $N^{1,p}(X,\mu)$ equipped with the
$N^{1,p}$-norm is a Banach space and a lattice \cite{Sh1}.

Let $E\subset X$ be a Borel set. The $p$--capacity of $E$ is defined
as usual to be the number
\[
\Capa_p(E) = \inf_u\left(\int_X|u|^p\, d\mu + \int_X|du|^p\,
  d\mu\right),
\]
where the infimum is taken over all $u\in N^{1,p}(X,\mu)$ for which
$u=1$ on $E$. We say that a property holds $p$--quasieverywhere,
$p$--q.e. for short, if the set of points for which the property does
not hold has $p$--capacity zero. For instance, if $u,v\in
N^{1,p}(X,\mu)$ and $u=v$ $\mu$-a.e., then $u=v$ $p$--q.e. and
$\|u-v\|_{1,p}=0$. If we, moreover, redefine a function $u\in
N^{1,p}(X,\mu)$ on a set of zero $p$--capacity, then it remains a
representative of the same equivalence class in $N^{1,p}(X,\mu)$.

We shall also use Sobolev spaces defined on a domain $\Om$ (i.e. a
non-empty open pathconnected set) of $X$; the space $N^{1,2}(\Om,\mu)$
is defined in the same way the space $N^{1,2}(X,\mu)$ is, but
considering $\Om$ as the ambient space. The space of Sobolev
functions with zero boundary values is instead defined as
\[
 N^{1,p}_0(\Om,\mu)=\left\{
u\in N^{1,p}(X,\mu): u=0\ p\textrm{-q.e. on } X\setminus \Om
\right\}.
\]
We have that $N^{1,p}_0(\Om,\mu)=N^{1,p}(X,\mu)$ as Banach spaces if
and only if $\text{Cap}_p(X\setminus \Om)=0$. 

In what follows, let $p=2$. By \cite{FHK}, the Cheeger differentiable
structure extends to all functions in $N^{1,2}(X,\mu)$ and
$N^{1,2}(\Om,\mu)$, and hence we define an inner product on
$N^{1,2}(X,\mu)$ by the Dirichlet form
\[
\cE(u,v)=\int_X \scal{du}{dv}\,d\mu, 
\]
for all $u,v \in N^{1,2}(X,\mu)$.  It can be proved that such a form
is strongly regular with the domain, or core, given by $N^{1,2}(X,\mu)$. 

We recall that a Dirichlet form is said to be \emph{strongly regular}
if there exists a subset $K$ of the domain of the Dirichlet form,
dense in both this domain and in the class of Lipschitz functions on
$X$, such that the distance $d_{\cE}:X\times X\to[0,\infty]$ defined,
in our case, by
\[
   d_{\cE}(x,y)=\sup\left\{\pip(x)-\pip(y):\ |d\pip(x)|\le 1\right\}
\]
is a metric on $X$ that induces the same topology on $X$ as the
original metric topology on $X$. In fact, under the doubling property
and a Poincar\'e inequality $d_{\cE}$ is bi-Lipschitz equivalent to
the original metric $d$ on $X$, and so the Dirichlet form $\cE(u,v)$
is strongly regular. The set $K$ is called a core of $\cE$.  We refer
to \cite{Sturm} and \cite{FOT} for more details.

For each $\alpha>0$ we define the
bilinear form
\[
\cE_\al(u,v)=\al \int_X uv\,d\mu  +\cE(u,v).
\]
We thus have on $N^{1,2}(X,\mu)$ the norm $\|\cdot\|_\al$ induced by
$\cE_\al$ which is equivalent to the $N^{1,2}$-norm. In this way,
$N^{1,2}(X,\mu)$ with the norm $\|\cdot\|_\al$ is a Hilbert space with
inner product $\cE_\al$. Note that $\cE$ by itself is not an inner
product on $N^{1,2}(X,\mu)$; $\cE(u,u)=0$ if and only if $u$ is a
constant (see \cite{C}). If, for example, $\mu(X)<\infty$, then
$\cE(u,u)=0$ does not imply that $u=0$. 

The fact that the bilinear form $\cE_\al$ yields a Hilbert space can be
seen as follows. Since the $N^{1,2}$-norm is comparable to the
$\cE_\al$-norm, we have that $N^{1,2}(X,\mu)$ is complete also with
respect to the $\cE_\al$-norm. In this way
the $\cE_\al$-norm is well defined for any $u\in N^{1,2}(X,\mu)$. By
approximation and the linearity of the map $u\mapsto du$, the Leibniz
rule follows for functions $u$ and $v$ in $N^{1,2}(X,\mu)$ (we refer
for these properties to the paper \cite{FHK}).  

\begin{remark}
  We point out that the convergence of a sequence $(u_k)_k$ to a
  function $u$ in $N^{1,2}(X,\mu)$ is same as the convergence of the
  two sequences $(u_k-u)_k$ and $(g_{u_k-u})_k$ to $0$ in
  $L^2(X,\mu)$.

  In general, the convergence of $u_k$ to $u$
  in $L^2(X,\mu)$ together with the convergence of $g_{u_k}$ to $g_u$
  in $L^2(X,\mu)$ does not imply that $u_k$ converges to $u$ in
  $N^{1,2}(X,\mu)$. As a counterexample, consider the metric space
  $X=\R^2$ with the distance induced by the norm $\|(x,y)\|_1=|x|+|y|$
  and with $\mu$ the Lebesgue measure; in this case the upper gradient
  is determined by the dual norm
  $\|(x,y)\|_\infty=\max\{|x|,|y|\}$. It suffices to verify this for a
  Lipschitz function $u$. For such function, by \cite{C}, denoting by
  $B^{(1)}_r(x_0,y_0)$ the ball in the norm $\|\cdot\|_1$ with radius
  $r$ centered at $(x_0,y_0)$, we have that
\begin{align*}
g_u(x_0,y_0) & = \lim_{r\to 0} \sup_{(x,y)\in B^{(1)}_r(x_0,y_0)}
\frac{|u(x,y)-u(x_0,y_0)|}{r} \\
& =\max_{\|v\|_1=1} \scal{\nabla
  u(x_0,y_0)}{v}_{\R^2}=\|\nabla u(x_0,y_0)\|_\infty.
\end{align*}
The sequence $u_k(x,y)=x+f_k(y)$, where $f_k(y)={\rm
  dist}(y,\frac{1}{k}\Z)$ converges to the function $u(x,y)=x$, but
for a.e. point
\[
g_{u_k}(x,y)=\|\nabla u_k(x,y)\|_\infty=1=\|\nabla u(x,y)\|_\infty=g_u(x,y) 
\]
and
\[
g_{u_k-u}(x,y)=\|\nabla u_k(x,y)-\nabla u(x,y)\|_\infty=\|\nabla f_k(y)\|_\infty=1.
\]
Nevertheless, it is possible to use Mazur's lemma to prove that for a
convex combination the aforementioned property holds true, both for
the Cheeger differentiable structure and for the upper gradient.   
For the Cheeger differentiable structure,
however, it is not necessary to take convex combinations. Indeed, in
this case the sequence of gradients $du_h$ is bounded in
$L^2(X,\R^k,\mu)$, and so it is weakly convergent to some $\varphi\in
L^2(X,\R^k,\mu)$.  Mazur's lemma is then needed only to show that
$\varphi=du$. We can consider convex combinations
\[
v_k=\sum_{i=1}^{N(k)} \lambda^{(k)}_i u_i 
\]
with strong convergence $v_k\to u$ in $L^2(X,\mu)$ and $dv_k\to
\varphi$ in $L^2(X,\R^k,\mu)$, that is $v_k\to u$ in $N^{1,2}(X,\mu)$,
and we may then conclude that $\varphi=du$.  We then obtain
\begin{align*}
  \lim_{k\to \infty}\int_X |du-du_k|^2\,d\mu &= \int_X
  |du|^2\,d\mu+\lim_{k\to \infty}\left(
  \int_X |du_k|^2\,d\mu -2\int_X \scal{du}{du_k}\,d\mu\right) \\
  & = 2\int_X |du|^2\,d\mu-2 \int_X \scal{du}{\varphi}\,d\mu=0
\end{align*}
by the weak convergence.
\end{remark}

\section{Metric Laplace operator}
\label{SecconstLapl}

In this section we construct a metric Laplace operator $\Delta_X$ on the metric
measure space $(X,d,\mu)$. Recall that a Dirichlet form $\cE$ is
strongly local if whenever $u,v$ are in the domain of $\cE$ and $u$ is
constant on the support of $v$, then $\cE(u,v)=0$. Having a strongly
local Dirichlet form at one's disposal it is rather standard argument
to construct an operator associated to the form. Most of the statements (without
detailed proofs)
can be found in the book of Fukushima, Oshima and Takeda \cite{FOT},
but we provide complete proofs for the reader's convenience.  Since
this operator plays the role of the Laplace operator on $X$, we shall
denote it by $\Delta_X$. Setting
\begin{align*} {\rm Dom}(\Delta_X) & = \left\{ u\in N^{1,2}(X,\mu):
    \textrm{there exists } f\in L^2(X,\mu) \right. \\
  & \qquad \textrm{ such that }
    \cE(u,v)=-\int_X fv\,d\mu \left.\textrm{ for all }v\in N^{1,2}(X,\mu)
  \right\},
\end{align*}
the Laplace operator is defined by
\[
\Delta_X u=f.
\]

We summarize the main properties of this operator in the following theorem.  
The main point is to construct the resolvent operator
$R_\al$, i.e. an operator that gives for any $\al>0$ the formal
solution of the problem
\begin{equation}\label{resolvent}
(\al-\Delta_X)u=f,
\end{equation}
and to deduce from this the main properties of $\Delta_X$.

\begin{theorem}\label{defResolv}
For each $\al>0$, there is an injective bounded linear operator 
$R_\al:L^2(X,\mu)\to  N^{1,2}(X,\mu)$ such that for all $v\in N^{1,2}(X,\mu)$
\[
\int_X fv\, d\mu = \cE_\al(R_\al f,v) = \cE(R_\al f,v)+\al (R_\al f, v)_2.
\]
This operator satisfies:
\begin{enumerate}
\item
for any $f\in L^2(X,\mu)$, $\|R_\al f\|_2\leq \frac{1}{\al}\|f\|_2$;
\item for any $\al,\beta>0$, $R_\al(L^2(X,\mu))=R_\beta(L^2(X,\mu))$,
  and the resolvent equation holds true
\begin{equation}\label{resolventIdentity}
R_\al f -R_\beta f=(\beta-\al)R_\al R_\beta f
\end{equation}
for all $f\in L^2(X,\mu)$;
\item 
for any $f\in L^2(X,\mu)$, we have the following limit in the $L^2(X,\mu)$-norm;
\begin{equation}\label{strongCont}
\lim_{\al\to\infty} \al R_\al f=f
\end{equation}
\end{enumerate}
Properties 2. and 3. imply that $R_\al(L^2(X,\mu))$ is dense in
$L^2(X,\mu)$. In addition, 
\[
{\rm Dom}(\Delta_X)=R_\al(L^2(X,\mu))
\]
for any $\al>0$, and for $u=R_\al f$, $\Delta_X u:=\al u-f$ is
independent of $\alpha$.
\end{theorem}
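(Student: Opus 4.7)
The plan is to construct $R_\alpha$ by a direct application of the Riesz representation theorem to the Hilbert space $(N^{1,2}(X,\mu),\mathcal{E}_\alpha)$, which the excerpt has already established. Given $f\in L^2(X,\mu)$, the linear functional $v\mapsto\int_X fv\,d\mu$ on $N^{1,2}(X,\mu)$ is bounded in the $\mathcal{E}_\alpha$-norm because $\|v\|_2\leq\alpha^{-1/2}\|v\|_\alpha$, so Riesz yields a unique $R_\alpha f\in N^{1,2}(X,\mu)$ with $\mathcal{E}_\alpha(R_\alpha f,v)=(f,v)_2$ for every $v$. Linearity is immediate from uniqueness, and injectivity follows from the density of $N^{1,2}(X,\mu)$ in $L^2(X,\mu)$ (which holds because $N^{1,2}$ contains the compactly supported Lipschitz functions). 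Testing against $v=R_\alpha f$ and discarding the nonnegative term $\mathcal{E}(R_\alpha f, R_\alpha f)$ yields $\alpha\|R_\alpha f\|_2^2\leq(f,R_\alpha f)_2\leq\|f\|_2\|R_\alpha f\|_2$, which gives the contraction estimate in item~1.

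For item~2, I write $u_\alpha:=R_\alpha f$, $u_\beta:=R_\beta f$ and subtract the defining equations to obtain $\mathcal{E}(u_\alpha-u_\beta,v)+\alpha(u_\alpha-u_\beta,v)_2=(\beta-\alpha)(u_\beta,v)_2$ for all $v\in N^{1,2}(X,\mu)$. Uniqueness in the Riesz step then forces $u_\alpha-u_\beta=(\beta-\alpha)R_\alpha u_\beta=(\beta-\alpha)R_\alpha R_\beta f$. The same algebraic rearrangement rewrites the defining equation for $R_\alpha f$ in the form $R_\alpha f=R_\beta\!\left(f+(\beta-\alpha)R_\alpha f\right)$, giving the range equality $R_\alpha(L^2(X,\mu))=R_\beta(L^2(X,\mu))$.

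The strong continuity in item~3 is the most delicate step, since $\mathcal{E}$ is only positive semidefinite on $N^{1,2}(X,\mu)$. For $f\in N^{1,2}(X,\mu)$, I would test the defining equation against both $v=f$ and $v=R_\alpha f$, expand $\|f-\alpha R_\alpha f\|_2^2=\|f\|_2^2-2\alpha(R_\alpha f,f)_2+\alpha^2\|R_\alpha f\|_2^2$, and substitute to reduce the right-hand side to the clean quantity $\mathcal{E}(R_\alpha f,f)-\alpha\,\mathcal{E}(R_\alpha f,R_\alpha f)$. Cauchy--Schwarz for the semidefinite form $\mathcal{E}$ and the bound $\mathcal{E}(R_\alpha f,R_\alpha f)\leq\alpha^{-1}\|f\|_2^2$ (again from testing against $R_\alpha f$) together give $\|f-\alpha R_\alpha f\|_2^2\leq\alpha^{-1/2}\|f\|_2\sqrt{\mathcal{E}(f,f)}\to 0$. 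A standard three-$\varepsilon$ argument, using density of $N^{1,2}(X,\mu)$ in $L^2(X,\mu)$ together with the uniform contraction from item~1, extends the conclusion to arbitrary $f\in L^2(X,\mu)$; density of $R_\alpha(L^2(X,\mu))$ in $L^2(X,\mu)$ then follows at once, because every $f\in L^2(X,\mu)$ is the $L^2$-limit of $\alpha R_\alpha f\in R_\alpha(L^2(X,\mu))$ as $\alpha\to\infty$.

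For the final assertion both inclusions in ${\rm Dom}(\Delta_X)=R_\alpha(L^2(X,\mu))$ are essentially tautological from the definition: if $u=R_\alpha f$, then the defining identity rearranges to $\mathcal{E}(u,v)=-(\alpha u-f,v)_2$, placing $u\in{\rm Dom}(\Delta_X)$ with $\Delta_X u=\alpha u-f$; conversely, given $u\in{\rm Dom}(\Delta_X)$ with associated $h\in L^2(X,\mu)$, the function $f:=\alpha u-h$ satisfies $\mathcal{E}_\alpha(u,v)=(f,v)_2$ for all $v$, so $u=R_\alpha f$ by uniqueness. Independence of the definition $\Delta_X u:=\alpha u-f$ from the choice of $\alpha$ is a one-line consequence of item~2: if also $u=R_\beta g$, then the resolvent identity forces $g=f+(\beta-\alpha)u$, whence $\beta u-g=\alpha u-f$. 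I expect the strong continuity estimate to be the main technical obstacle, precisely because $\mathcal{E}$ lacks an inner-product structure and every manipulation must respect the semidefinite nature of the form.
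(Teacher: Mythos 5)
Your proposal is correct and follows the paper's strategy in all essentials: Riesz representation on the Hilbert space $(N^{1,2}(X,\mu),\cE_\al)$, testing against $v=R_\al f$ for the contraction estimate, the same algebra for the resolvent identity and the range equality, an estimate for $f\in N^{1,2}(X,\mu)$ followed by a density/three-$\eps$ argument for strong continuity, and direct verification of both inclusions in ${\rm Dom}(\Delta_X)=R_\al(L^2(X,\mu))$. The one genuinely different local step is injectivity: the paper proves it only \emph{after} establishing the resolvent equation and strong continuity, arguing that $R_\beta f=0$ forces $R_\al f=0$ for every $\al$ and hence $f=\lim_{\al\to\infty}\al R_\al f=0$, whereas you get it immediately from $(f,v)_2=\cE_\al(R_\al f,v)=0$ for all $v$ in the dense subspace $N^{1,2}(X,\mu)$ of $L^2(X,\mu)$. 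Your route is more elementary, and it makes injectivity available up front, which you then use cleanly in the $\al$-independence argument (the paper instead computes $R_\al(A_\al u-A_\beta u)=0$ and invokes injectivity at that point). Your strong-continuity computation, yielding the exact identity $\|f-\al R_\al f\|_2^2=\cE(R_\al f,f)-\al\,\cE(R_\al f,R_\al f)$ and then the bound $\al^{-1/2}\|f\|_2\,\cE(f,f)^{1/2}$, is a correct variant of the paper's one-sided estimate $\al\|\al R_\al f-f\|_2^2\le\cE(f,f)$; the paper's decay rate ($\al^{-1/2}$ for the norm) is slightly sharper than yours ($\al^{-1/4}$), but both suffice for the limit.
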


\begin{proof}
  Let us fix $f\in L^2(X,\mu)$. Then we can define the linear operator
  $T_f:N^{1,2}(X)\to\R$ by $T_f(v)=(f,v)_2:=\int_X f\, v\, d\mu$. 
  We have that
\[
|T_f(v)|\le \|f\|_2 \, \|v\|_2\le 
\frac{\|f\|_2}{\sqrt{\al}}\cE_\al(v,v)^{1/2}. 
\]
Therefore $T_f$ is a bounded linear operator on the Hilbert space
$(N^{1,2}(X,\mu), \cE_\al)$, so by the Riesz representation theorem,
there exists an element of $N^{1,2}(X,\mu)$, denoted by $R_\al f$,
such that $T_f(v)=\cE_\al(R_\al f,v)$. The map $R_\al:L^2(X,\mu)\to
N^{1,2}(X,\mu)$ defined above is linear by the linearity of the defining
operator $f\mapsto T_f$.

Since
\[
\al (R_\al f,v)_2=\cE_\al(R_\al f,v)-\cE(R_\al f,v) =(f,v)_2-\cE(R_\al f,v),
\]
choosing $v=R_\al f$ and applying H\"older's inequality, we see that
\[
0\le \al \|R_\al f\|^2_2= \al (R_\al f,R_\al f)_2=(f,R_\al
f)_2-\cE(R_\al f,R_\al f) \le (f,R_\al f)_2 \le \|f\|_2 \|R_\al
f\|_2. \label{e1.5}
\]
Thus we obtain Claim~1 of the theorem, namely,
\[ 
\al \| R_\al f\|_2\le \|f\|_2. \label{e2}
\]
Thus $R_\al$ as an operator mapping $L^2(X,\mu)$ to $L^2(X,\mu)$ is
bounded with image in $N^{1,2}(X,\mu)\subset L^2(X,\mu)$ and its
operator norm given by
\begin{equation}
\|R_\al\|:=\|R_\al\|_{L^2\to L^2} 
\le \frac{1}{\al}. \label{e3}
\end{equation}

We now prove the resolvent equation \eqref{resolventIdentity}. Let us
take $f\in L^2(X,\mu)$ and $v\in N^{1,2}(X,\mu)$. Then
\begin{align*}
\cE_\al (R_\al f-R_\beta f+(\al-\beta)R_\al R_\beta f,v) & =
       \cE_\al(R_\al f,v)-\cE_\al(R_\beta f,v)+(\al-\beta )\cE_\al(R_\al R_\beta f,v) \\
    =&(f,v)_2-\cE(R_\beta f,v)-\al(R_\beta f,v)_2+(\al-\beta)(R_\beta f,v)_2 \\
    =& (f,v)_2-\cE_\beta (R_\beta f,v)=0.
\end{align*}
This means that for $f\in N^{1,2}(X,\mu)$, and then by density also
for $f\in L^2(X,\mu)$, we have the identity
\[
   R_\al f-R_\beta f +(\al-\beta )R_\al R_\beta f=0.
\]
Moreover, if we consider $f\in N^{1,2}(X,\mu)$, we have (denoting 
$\cE_\al(f,f)^{1/2}=:\Vert f\Vert_\al$)
\begin{align*}
  \al\|\al R_\al f-f\|^2_2 &\le \cE_\al(\al R_\al f-f,\al R_\al f-f) \\
  & =\al^2\cE_\al(R_\al f, R_\al f)+\Vert f\Vert_\al^2-2\al\cE_\al(R_\al f,f)\\
  & = \al^2(f,R_\al f)_2+\cE(f,f)-\al\|f\|^2_2.
\end{align*}
By H\"older's inequality and by using \eqref{e3}, we also get 
\begin{equation}\label{e4}
  \al (f,R_\al f)_2-\|f\|^2_2 \le \al \|R_\al f\|_2\, \|f\|_2-\|f\|^2_2
  \le 0. 
\end{equation}
Therefore, 
\[
\lim_{\al\to\infty}\|\al R_\al f-f\|_2 \leq \lim_{\al\to\infty}\sqrt{\frac{\cE(f,f)}{\al}}  
=0.
\]
To extend this limit to be valid for any $f\in L^2(X,\mu)$, we use the
boundedness of $R_\al$, by fixing $f_\varepsilon\in N^{1,2}(X,\mu)$
such that $\|f-f_\varepsilon\|_2\leq \varepsilon$.  In this way we get
that
\[
\|\al R_\al f-f\|_2\leq  \|\al R_\al f_\varepsilon -f_\varepsilon\|_2
+\al \| R_\al (f-f_\varepsilon)\|_2 +\|f-f_\varepsilon\|_2
\leq  \|\al R_\al f_\varepsilon -f_\varepsilon\|_2+2\varepsilon,
\]
and hence
\[
 \limsup_{\al\to0}\|\al R_\al f-f\|_2\leq 2\varepsilon.
\]
From this Claim~3 of the theorem follows since $\varepsilon$ was arbitrary.

We have now proved that $R_\al$ is a strongly continuous resolvant
(see \cite{FOT}) for any $\al>0$. Let us next prove injectivity of
$R_\al$.  Suppose $f\in L^2(X,\mu)$ is such that $R_\beta f=0$ for
some $\beta >0$. Then by the resolvant equation
(\ref{resolventIdentity}),
\[
  0=R_\al f-R_\beta f+(\al-\beta )R_\al R_\beta f=R_\al f,
\]
that is, $R_\al f=0$ for every $\al>0$. Now by equation~\eqref{strongCont}, we see that
$f=0$, that is, $R_\al$ is injective.
We can therefore define the inverse map $R_\al^{-1}:R_\al(L^2(X,\mu))\to L^2(X,\mu)$.
We claim that 
\[
{\rm Dom}(\Delta_X)=R_\al(L^2(X,\mu)), \qquad \Delta_X u=\al u-R_\al^{-1}u.
\]
For this definition to be consistent, we first show that the set
$R_\al(L^2(X,\mu))$ and the operator $A_\al u:=\al u-R_\al^{-1}u$ do
not depend on $\al$.  By the resolvent equation~\eqref{resolventIdentity}, 
\[
     R_\beta f=R_\al(f+(\al-\beta)R_\beta f).
\]
Therefore, for every $f\in L^2(X,\mu)$, $R_\beta f\in
R_\al(L^2(X,\mu))$, and hence 
\[
   R_\beta (L^2(X,\mu))\subset R_\al(L^2(X,\mu)).
\] 
By the symmetry of the argument, we have the
required result $R_\al(L^2(X,\mu))=R_\beta(L^2(X,\mu))$. Let us write
$D=R_\beta(L^2(X,\mu))$.

If $u\in D$ and $\al,\beta >0$, then $A_\al u-A_\beta
u=(\al-\beta)u -R_\al^{-1}u+R_\beta^{-1}u$. Therefore,
\[
  R_\al(A_\al u-A_\beta u)=\al R_\al u-\beta R_\al u-u+R_\al R_\beta ^{-1}u.
\]
On the other hand, since $D=R_\beta(L^2(X,\mu))$,
there exists $f\in L^2(X,\mu)$ such that $R_\beta f=u$. Hence we have,
by the resolvent equation (\ref{resolventIdentity}), that 
\begin{align*}
  R_\al(A_\al u-A_\beta u) & = \al R_\al R_\beta f-\beta R_\al R_\beta f-R_\beta f+R_\al f \\
          & = R_\al f-R_\beta f+(\al-\beta)R_\al R_\beta f=0.
\end{align*}
By injectivity of $R_\al$, we see that $A_\al u-A_\beta u=0$, i.e.
$A_\al u=A_\beta u$. 

Let us now show that $R_\al(L^2(X,\mu))\subset {\rm
  Dom}(\Delta_X)$. Let $u\in R_\al(L^2(X,\mu))$. Then there exists
$f\in L^2(X,\mu)$ such that $u=R_\al f$. The identity $\cE_\al (R_\al
f,v)=(f,v)_2$ for any $v\in N^{1,2}(X,\mu)$ can be written out as
follows
\begin{align*}
  \int_X \scal{du}{dv}\,d\mu=&\cE(u,v)=\cE_\al(u,v)-\al(u,v)_2\\
  &=\cE_\al(R_\al f,v)-\al(u,v)_2\\
  &=(f,v)_2-\al(u,v)_2=-\int_X (\al u-f)v\,d\mu
\end{align*}
for all $v\in N^{1,2}(X,\mu)$. This simply means that $u\in {\rm
  Dom}(\Delta_X)$ and that $\Delta_X u=\al u-f=\al u-R^{-1}_\al
u=A_\al u$.

For the reverse inclusion, ${\rm Dom}(\Delta_X) \subset
R_\al(L^2(X,\mu))$, let us consider $u\in {\rm Dom}(\Delta_X)$. Thus
there exists $f\in L^2(X,\mu)$ such that for all $v\in N^{1,2}(X,\mu)$
we have
\[
\int_X \scal{du}{dv}\, d\mu=-\int_X fv\,d\mu.
\]
Then consider $w:=R_\al(\al u-f)$; we obtain that
\[
\cE_\al(w,v)=(\al u-f,v)_2=\al \int_X uv\,d\mu -\int_X fv\,d\mu=\al
\int_X uv\,d\mu +\cE(u,v)=\cE_\al(u,v),
\]
that is $w=u$, which means that $u\in R_\al(L^2(X,\mu))$. 
The identity $f=\Delta_X u=\al u-R_\al^{-1}u$ follows easily.

In addition to the density of ${\rm Dom}(\Delta_X)$ in $L^2(X,\mu)$,
we also have that ${\rm Dom}(\Delta_X)$ is dense in
$N^{1,2}(X,\mu)$. In fact, by \eqref{e3} and \eqref{e4}, for any $f\in
N^{1,2}(X,\mu)$, we have that 
\begin{align*}
  \| \al R_\al f-f\|_\al^2 = & \cE_\al(\al R_\al f-f, \al R_\al f-f)=
  \al^2 \cE_\al(R_\al f,R_\al f) -2\al \cE_\al (R_\al f,f)+\cE_\al(f,f) \\
  & = \al^2(f,R_\al f)_2 -\al(f,f)_2 +\cE(f,f)\leq \cE(f,f),
\end{align*}
that is the sequence $(\al R_\al f-f)_\al$ is bounded in
$N^{1,2}(X,\mu)$.  Therefore, for any sequence of positive real
numbers $(\al_n)_n$ so that $\lim_n\al_n=\infty $, the corresponding
sequence of functions $\al_nR_{\al_n}f-f$ is a bounded sequence, and
hence by Mazur's lemma we have a sequence of convex combinations
converging in $N^{1,2}(X,\mu)$;
\[
  \left(\sum_{i=n}^{N(n)}\lambda _{i,n}\al_iR_{\al_i}f\right) -f \to w\in N^{1,2}(X,\mu).
\]
On the other hand, $\lim_{\al\to\infty}\al R_\al f=f$ in
$L^2(X,\mu)$. Thus $w=0$ $\mu$-a.e.~in $X$, and hence by the fact that
$w\in N^{1,2}(X,\mu)$ we know that $w=0$ $p$-q.e. in $X$. Therefore,
it must be that $w=0$. Observe that the sequence of convex
combinations $\sum_{i=n}^{N(n)}\lambda _{i,n}\al_iR_{\al_i}f$ lies in
${\rm Dom}(\Delta_X)$ and converges to $f\in N^{1,2}(X,\mu)$, so the
proof is completed.
\end{proof}

We can now give the definition of a Cheeger harmonic function in the
obvious way.

\begin{definition}
  A function $u\in N^{1,2}(\Omega,\mu)$ is said to be \emph{Cheeger
    harmonic} (referred to in this paper as harmonic) if
\[
\int_\Omega \scal{du}{dv}\, d\mu =0 
\]
for all $v \in N^{1,2}_0(\Omega,\mu)$, i.e. $u$ is harmonic if and only if
$u\in{\rm Dom}(\Delta_\Omega)$ and $\Delta_\Omega u=0$. Here $\Delta_\Om$
is the operator defined in Remark~\ref{Dirich-Dom} below.
\end{definition}
  
\begin{remark}
  The notion of Cheeger harmonicity refers to the fact that we are
  using the Cheeger differentiable structure. This notion has been
  previously considered in the paper \cite{KRS}, where Lipschitz
  regularity of Cheeger harmonic functions has been investigated. We
  also underline that Cheeger harmonicity can be equivalently be given
  in terms of a minimizer of the Dirichlet energy: $u$ is Cheeger
  harmonic if and only if for any ball $B_r$
\[
\int_{B_r} |du|^2\,d\mu\leq
\int_{B_r} |dv|^2\,d\mu,
\]
for all $v$ such that $v-u\in N^{1,2}_0(B_r,\mu)$. 
\end{remark}

\begin{remark}\label{Dirich-Dom}
  Let $\Om\subset X$ be a bounded domain satisfying a
  $(1,2)$-Poincar\'e inequality with ${\rm Cap}_2(X\setminus
  \Om)>0$. The previous construction of $\Delta_X$ can also be used to
  construct a Laplace operator on the subdomain $\Om$. There are
  essentially two different Laplace operators; the first is just the
  restriction of $\Delta_X$ to $\Om$ and is defined by
\begin{align*}
{\rm Dom}(\Delta_\Om) & =\left\{
u\in N^{1,2}(\Om,\mu): \textrm{there exists } f\in L^2(\Om,\mu) \textrm{ such that } \right. \\
& \qquad \left. \int_\Om \scal{du}{dv}\,d\mu=
-\int_\Om fv\,d\mu \textrm{ for all } v\in N^{1,2}_0(\Om,\mu)
\right\},
\end{align*}
and the operator is given by
\[
\Delta_\Om u=f.
\]
The second alternative, adapted to the inhomogeneous Dirichlet problem, 
is the operator defined by
\begin{align*}
{\rm Dom}(\Delta^D_\Om) & = \left\{
u\in N^{1,2}_0(\Om,\mu): \textrm{there exists } f\in L^2(\Om,\mu) \textrm{ such that } \right. \\
& \qquad \left. \int_\Om \scal{du}{dv}\, d\mu=
-\int_\Om fv\, d\mu \textrm{ for all } v\in N^{1,2}_0(\Om,\mu)
\right\},
\end{align*}
and
\[
\Delta^D_\Om u= f.
\]
To define the latter operator, the previous procedure has to be
modified by considering the Hilbert space $N^{1,2}_0(\Om,\mu)$ with
the inner product $\cE_\al$ for all $\al >0$, to obtain the resolvent operator
$R_\al^0:N^{1,2}_0(\Om,\mu)\to N^{1,2}_0(\Om,\mu)$ with
\[
\cE_\al(R_\al^0 f,v)=(f,v)_2
\] 
whenever $v\in N^{1,2}_0(\Om,\mu)$. Since the vector subspace
$\text{Lip}_0(\Om)$ of $N^{1,2}_0(\Om,\mu)$ is also a dense subspace
of $L^2(\Om,\mu)$, we may extend $R_\al^0$ to be an injective map from
$L^2(\Om)$ to $N^{1,2}_0(\Om)$.  These properties, like the one proved
for $R_\al$ in Theorem~\ref{defResolv}, are the essential properties
for the definition of the operator $\Delta^D_\Om$.

It is easy to verify that the operator $\Delta_\Om$ is the restriction
of $\Delta_X$ to $\Om$ in the following sense: If $u\in\text{Dom}(\Delta_X)$,
then 
\[
\Delta_\Om (u_{|\Om})=(\Delta_X u)_{|\Om}.
\]
On the other hand, the operator $\Delta^D_\Om$ is the restriction of $\Delta_\Om$ to
the space $N^{1,2}_0(\Om,\mu)$, that is 
\[
{\rm Dom}(\Delta_\Om^D)={\rm Dom}(\Delta_\Om)\cap N^{1,2}_0(\Om,\mu) 
\]
with $\Delta_\Om^D u=\Delta_\Om u$ for $u\in\text{Dom}(\Delta_\Om)\cap
N^{1,2}_0(\Om,\mu)$.
\end{remark}

\subsection{Measure-valued Laplace operator}
\label{sect:measureLap}

Let $\Omega$ be a domain in $X$. We give the following definition of
the measure-valued Laplace operator $\mathscr{D}_\Om$ on $\Omega$. By
$\mathscr{M}_b(\Omega)$ we denote the space of all bounded signed
Borel measures on $\Omega$, i.e. $\nu\in \mathscr{M}_b(\Omega)$ is a
real-valued signed Borel measure on $\Omega$ with bounded total
variation 
\[
|\nu|(\Om)=\sup \left\{ \int_\Om \varphi\,d\nu: \varphi \in
  \Lip_c(\Om),\, \|\varphi\|_\infty \leq 1 \right\}<\infty.
\]
We remark that to compute the total variation of a measure we test in the space $\Lip_c(\Om)$ of Lipschitz functions on $\Om$ with compact support instead of the space $C_c(\Omega)$ of continuous functions with compact support; we may do this since ${\rm Lip}_c(\Omega)$ is clearly dense in $C_c(\Omega)$.

We define
\begin{align}\label{Laplacian}
{\rm Dom}(\mathscr{D}_\Om)  = &\left\{
 u\in N^{1,2}(\Om,\mu): \textrm{ there exists } \nu\in
  \mathscr{M}_b(X) \textrm{ such that } \right. \nonumber \\
& \qquad\qquad\qquad\qquad \left. \cE(u,v)=-\int_\Om v\,d\nu \textrm{ for all } v\in
  {\rm Lip}_c(\Om) \right\},
\end{align}
and then we set 
\[
\mathscr{D}_\Om u=\nu.	 
\] 

\begin{example}\label{example1}
{\rm 
As an example, we can consider the Euclidean space $(\Rn,\|\cdot\|)$ and modify its
metric structure in two ways, which essentially lead to the same
metric measure structure.  We fix $\Om\subset \Rn$ an open set with
regular boundary and we can modify either the measure by considering
$d\mu=(1+\chi_\Om)d{\cal L}^n$, or the differential structure
$du=(1+\alpha\chi_\Om)\nabla u$, where $\alpha=\sqrt{2}-1$. 

In both cases we have for $u,v\in
C^2_c(\Rn)$
\begin{align*}
\int_\Rn \scal{du}{dv}\,d\mu =& \int_\Rn \nabla u\cdot \nabla v\,dx
+\int_\Om \nabla u\cdot \nabla v\,dx  \\
=&-\int_\Rn v\Delta u dx -\int_\Om v\Delta u\,dx +
\int_{\partial \Om} v\nabla u\cdot \nu_\Om\, d{\cal H}^{n-1}.
\end{align*}
Then $u\in {\rm Dom}(\Delta_{\Rn})$ if and only if $\nabla u\cdot
\nu_\Om=0$ on $\partial \Om$ and $\Delta u\in L^2(\Rn)$. In addition,
in the case $\mu=(1+\chi_\Om) {\cal L}^n$ with the standard
differential structure we also have
\[
\Delta_{\Rn} u= \Delta u.
\]
In the second case, where $\mu={\cal L}^n$ and $du=(1+\alpha\chi_\Om)\nabla
u$, we obtain
\[
\Delta_{\Rn} u= (1+\chi_\Om)\Delta u.
\]
In a similar fashion, ${\rm Dom}(\mathscr{D}_\Rn)$ is given by those
functions $u$ for which $\Delta u\in L^1(\Rn)$ and the trace of
$\nabla u\cdot \nu_\Om\in L^1(\partial \Om,{\cal H}^{n-1})$, and
\[
\mathscr{D}_\Rn u =\Delta u \mu - \nabla u\cdot \nu_\Om {\cal
  H}^{n-1}\res \partial \Om.
\]
}
\end{example}

It can be verified that
$\text{Dom}(\mathscr{D}_\Om)$ is a vector space and that
$\mathscr{D}_\Om$ is linear. We wish to expand the class of test
functions in the definition of the domain
$\text{Dom}(\mathscr{D}_\Om)$ from $\Lip_c(\Om)$ to allow for
test-functions $v$ in $N^{1,2}_0(\Om,\mu)$, see
Proposition~\ref{hopeful}.  For that, we need the following lemma.

\begin{lemma}\label{nulls}
If $E\subset \Om$ is a Borel set such that $\Capa_2(E)=0$, then for every 
$u\in {\rm Dom}(\mathscr{D}_\Om)$, $|\mathscr{D}_\Om u|(E)=0$.
\end{lemma}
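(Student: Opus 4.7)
The plan is to argue by contradiction using a Hahn--Jordan decomposition and the Cauchy--Schwarz inequality for $\cE$. Let $\nu := \mathscr{D}_\Om u$ and suppose for contradiction that $|\nu|(E) > 0$. Write $\nu = \nu^+ - \nu^-$ with $\nu^\pm$ concentrated on disjoint Borel sets $P, N \subset \Om$. Replacing $u$ by $-u$ if necessary, we may assume $\nu^+(E) > 0$, and since $\nu^+$ lives on $P$ we have $\nu^+(E \cap P) > 0$. Inner regularity of the finite Borel measure $\nu^+$ produces a compact $K \subset E \cap P$ with $\nu^+(K) > 0$; by monotonicity $\Capa_2(K) = 0$, and since $K \subset P$ we have $\nu^-(K) = 0$. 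Outer regularity of $\nu^-$ then yields an open $V$ with $K \subset V$, $\overline{V}$ compact and contained in $\Om$, and $\nu^-(V) < \tfrac12 \nu^+(K)$.

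Next I would produce Lipschitz test functions concentrated near $K$. Since $\Capa_2(K) = 0$, one picks $u_k \in N^{1,2}(X,\mu)$ with $u_k = 1$ on $K$, $0 \le u_k \le 1$ (after truncation), and $\|u_k\|_{N^{1,2}} \to 0$. Fix a Lipschitz cutoff $\eta \in \Lip_c(\Om)$ with $\eta \equiv 1$ on $K$ and $\mathrm{supp}\,\eta \subset V$; the Leibniz rule (valid by the discussion in Section~\ref{sect:Prel}) shows $\|\eta u_k\|_{N^{1,2}} \to 0$. A standard density-and-truncation argument then delivers $\varphi_k \in \Lip_c(\Om)$ with $0 \le \varphi_k \le 1$, $\varphi_k \equiv 1$ on $K$, $\mathrm{supp}\,\varphi_k \subset V$, and $\|\varphi_k\|_{N^{1,2}} \to 0$. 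The $\varphi_k$ give on one hand
\[
\int_\Om \varphi_k \, d\nu = \int \varphi_k \, d\nu^+ - \int \varphi_k \, d\nu^- \ge \nu^+(K) - \nu^-(V) > \tfrac12 \nu^+(K) > 0,
\]
while on the other hand the defining identity $\int_\Om \varphi_k \, d\nu = -\cE(u,\varphi_k)$ together with Cauchy--Schwarz for $\cE$ yields
\[
\left|\int_\Om \varphi_k \, d\nu \right| \le \|du\|_{L^2(X,\mu)} \, \|d\varphi_k\|_{L^2(X,\mu)} \le \|du\|_{L^2} \, \|\varphi_k\|_{N^{1,2}} \longrightarrow 0.
\]
Comparing the two bounds is a contradiction, so $|\nu|(E) = 0$.

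The main obstacle is the construction of the Lipschitz capacitors $\varphi_k$ with the pointwise condition $\varphi_k \equiv 1$ on $K$ \emph{and} $\|\varphi_k\|_{N^{1,2}} \to 0$: the usual density of $\Lip(X)$ in $N^{1,2}(X,\mu)$ does not preserve pointwise values on a set that is itself $\Capa_2$-null, since representatives of an $N^{1,2}$-class are only determined up to $\Capa_2$-null sets. One circumvents this by invoking the outer regularity of $\Capa_2$ as a Choquet capacity to find open neighbourhoods $U_k \supset K$ with $\Capa_2(U_k) \to 0$; truncating an admissible function for $U_k$ at level $\tfrac12$ and then rescaling gives a function that is identically $1$ on an open set containing $K$, and multiplication by $\eta$ forces support inside $V$ while preserving smallness of the $N^{1,2}$-norm. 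Once these capacitors are in hand, the rest of the argument is an immediate combination of the Hahn decomposition with Cauchy--Schwarz.
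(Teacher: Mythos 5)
Your proof is correct and follows essentially the same route as the paper: Jordan decomposition, reduction to a compact subset of the positive set, Lipschitz capacitors of small $N^{1,2}$-norm that equal $1$ on that compact set, and the Cauchy--Schwarz bound $|\cE(u,\varphi_k)|\le \|du\|_{L^2}\|d\varphi_k\|_{L^2}\to 0$. The only cosmetic difference is the endgame: the paper shrinks the supports of the capacitors and invokes dominated convergence to compute $\lim_i\int_\Om\varphi_i\,d\mathscr{D}_\Om u=\mathscr{D}_\Om u(E)$ exactly, whereas you use outer regularity of $\nu^-$ to retain a uniform positive lower bound and conclude by contradiction---both work, and both rely on the same (standard, but worth citing, e.g.\ \cite{KaS}) fact that the capacity of a compact null set can be tested with Lipschitz functions.
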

 
\begin{proof}
  By the Jordan decomposition theorem, the measure $\mathscr{D}_\Om u$
  can be decomposed into its positive and negative parts,
  $\mathscr{D}^+_\Om u$ and $\mathscr{D}^-_\Om u$; this means that we
  can decompose $\Om$ into two disjoint Borel sets $\Om=\Om^+\cup
  \Om^-$ in such a way that $\mathscr{D}_\Om u(B)\geq0$ for every
  $B\subset \Om^+$ and $\mathscr{D}_\Om u(B)\leq 0$ for every
  $B\subset \Om^-$. Hence we may, without loss of generality, consider
  $E\subset \Om^+$; in fact we can decompose $E=E^+\cup E^-$ and use
  the monotonicity of capacity. Further, we may also assume that $E$
  is a compact set, since as Radon measures both $\mathscr{D}_\Om^+u$
  and $\mathscr{D}_\Om^-u$ are inner measures and $E$ is a Borel set.

  Since $\text{Cap}_2(E)=0$, we have also that the relative capacity
  $\text{Cap}_2(E,\Om)$ is zero. This can be seen by multiplying those
  Lipschitz test-functions which were used for computing
  $\text{Cap}_2(E)$ by another Lipschitz function $\eta$ which is $1$
  on a neighborhood of the compact set $E$ and has compact support in
  $\Om$. We can then find a sequence of Lipschitz functions
  $(\pip_i)_i$ so that $0\le \pip_i\le 1$ on $X$, $\pip_i=1$ on $E$,
  and $\Vert\pip_i\Vert_{N^{1,2}(X)}\le 2^{-i}$, and $\pip_i$ are
  compactly supported in $\Om$. We may assume that the sequence
  $(\pip_i)_i$ converges pointwise to zero outside of the compact set
  $E$ (we can do so by choosing $\pip_i$ to have support in the open
  set $\bigcup_{x\in E}B(x,1/i)$). We have
\begin{align*}
  \left\vert\int_X \pip_i\, d\mathscr{D}_\Omega u\right\vert=
  &\left\vert\int_X \langle du,d\pip_i\rangle\, d\mu
  \right\vert\le   \left(\int_X|du|^2\,d\mu\right)^{1/2} \left(\int_X |d\pip_i|^2\,d\mu\right)^{1/2} \\
  \le & \left(\int_X|du|^2\, d\mu \right)^{1/2}\ \Vert
  \pip_i\Vert_{N^{1,2}(X,\mu)},
\end{align*} 
which tends to 0 as $i\to \infty$.

On the other hand, since $\pip_i$ are all bounded by $1$ and
$|\mathscr{D}_\Om|(X)<\infty$, by the Lebesgue dominated convergence
theorem we have 
\[
\lim_{i\to\infty}\int_X\pip_i\, d\mathscr{D}_\Om
u=\mathscr{D}_\Om u(E)=\mathscr{D}^+_\Om u(E).
\]
A similar argument shows that $\mathscr{D}^-_\Om u(E)=0$, and hence
the proof follows.
\end{proof}

\begin{remark}
  The requirement that $E$ is a Borel set in the above lemma is not a
  serious restriction, because if $E\subset\Om$ is a set with
  $\text{Cap}_2(E)=0$, then there is a Borel set $E_0$ with $E\subset
  E_0\subset\Om$ such that $\text{Cap}_2(E_0)=0$.
\end{remark}


The following proposition tells us that we do not have to restrict
ourselves to having test-functions $v$ only in $\Lip_c(X)$
in~\eqref{Laplacian}. 

\begin{proposition}\label{hopeful}
  Let $u\in{\rm Dom}(\mathscr{D}_\Om)$. Then for every $v\in
  N^{1,2}_0(\Om,\mu)\cap L^\infty(\Om,\mu)$ the following holds:
\[
\cE(u,v) = -\int_\Om v\, d\mathscr{D}_\Om u.
\]
\end{proposition}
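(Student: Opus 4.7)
The plan is to approximate $v$ by a sequence of Lipschitz functions compactly supported in $\Om$ and then pass to the limit in the defining identity of $\mathscr{D}_\Om$. Because $X$ is a complete metric measure space with a doubling measure and a $(1,2)$-Poincar\'e inequality, the space $\Lip_c(\Om)$ is dense in $N^{1,2}_0(\Om,\mu)$, so we can pick a sequence $(v_k)\subset \Lip_c(\Om)$ with $v_k \to v$ in $N^{1,2}(X,\mu)$. By a standard subsequence argument for Newtonian functions, we may also assume that $v_k \to v$ pointwise $2$-quasieverywhere in $\Om$.

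Next I would truncate: replace each $v_k$ by $\max(-M,\min(v_k,M))$ where $M=\|v\|_\infty$. Since the truncation operator is a contraction in the $N^{1,2}$-norm and preserves membership in $\Lip_c(\Om)$, the truncated sequence still lies in $\Lip_c(\Om)$, still converges to $v$ in $N^{1,2}(X,\mu)$ and pointwise $2$-q.e.\ in $\Om$, and it is now uniformly bounded by $M$.

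With this approximation in hand, pass to the limit in the identity $\cE(u,v_k)=-\int_\Om v_k\, d\mathscr{D}_\Om u$, valid for each $k$ by definition of $\mathscr{D}_\Om$. The left-hand side converges to $\cE(u,v)$ because $w\mapsto \cE(u,w)$ is a bounded linear functional on $N^{1,2}(X,\mu)$ (by Cauchy--Schwarz applied to the bilinear form). For the right-hand side, Lemma~\ref{nulls} tells us that $|\mathscr{D}_\Om u|$ does not charge the exceptional set of capacity zero on which $v_k$ might fail to converge to $v$ pointwise, so in fact $v_k\to v$ holds $|\mathscr{D}_\Om u|$-almost everywhere in $\Om$. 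Since $|\mathscr{D}_\Om u|(\Om)<\infty$ and $|v_k|\le M$, the Lebesgue dominated convergence theorem gives $\int_\Om v_k\, d\mathscr{D}_\Om u \to \int_\Om v\, d\mathscr{D}_\Om u$, which yields the claimed identity.

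The main obstacle will be rigorously justifying the pointwise $2$-q.e.\ convergence of a subsequence of $(v_k)$ together with the density of $\Lip_c(\Om)$ in $N^{1,2}_0(\Om,\mu)$; both are standard for Newtonian spaces under our assumptions, but they rely on capacitary arguments. Once these are in place, the crucial role of Lemma~\ref{nulls} is precisely to upgrade q.e.\ convergence to $|\mathscr{D}_\Om u|$-a.e.\ convergence, making dominated convergence applicable.
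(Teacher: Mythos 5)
Your proof is correct and follows essentially the same route as the paper's: approximate $v$ by uniformly bounded, compactly supported Lipschitz functions converging in the $N^{1,2}$-norm and pointwise $2$-q.e., then use Lemma~\ref{nulls} to upgrade to $|\mathscr{D}_\Om u|$-a.e.\ convergence and apply dominated convergence. The only (cosmetic) difference is that the paper splits the approximation into two stages --- first Lipschitz approximation of compactly supported functions, then compactly supported approximation within $N^{1,2}_0(\Om,\mu)$ --- whereas you compose these into a single density claim for $\Lip_c(\Om)$ in $N^{1,2}_0(\Om,\mu)$.
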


\begin{proof}
   We first assume that $v$ has compact support in $\Om$.
  Note that by the $(1,2)$-Poincar\'e inequality we can approximate
  compactly supported functions in $N^{1,2}(\Om,\mu)$ by Lipschitz
  functions. So we can find a sequence of compactly supported
  Lipschitz functions $(\pip_i)_i$ on $\Om$ that converge to $v$ in
  the $N^{1,2}(\Om,\mu)$--norm. By passing to a subsequence if
  necessary, we may also assume that $\pip_i\to v$ pointwise outside a
  set of zero $2$-capacity; we refer to \cite{Sh1}.
  Since $v$ is bounded, we can also assume that the approximating
  compactly supported Lipschitz functions $\pip_i$ are also uniformly
  bounded by $M:=\|v\|_\infty$. Applying $\pip_i$ as in~\eqref{Laplacian}, we see that
\[
   \int_\Om \pip_i\, d\mathscr{D}_\Om u \, =\, -\int_\Om \scal{d\pip_i}{du}\, d\mu 
                           \to -\int_\Om \scal{dv}{du}\, d\mu.
\]
By Lemma~\ref{nulls}, we know that $\pip_i\to v$ almost everywhere
with respect to the total variation measure $|\mathscr{D}_\Om u|$. 
By the Lebesgue dominated convergence theorem
applied to the uniformly bounded functions $\pip_i$ with respect to
the positive and negative parts $\mathscr{D}_\Om u^+$,
$\mathscr{D}_\Om u^-$ of the signed Borel measure $\mathscr{D}_\Om u$,
we may conclude that
\[
\int_\Om \pip_i\, d\mathscr{D}_\Om u\to \int_\Om v\, d
\mathscr{D}_\Om u. 
\]
Hence equation~(\ref{Laplacian}) holds for all
compactly supported functions $v\in N^{1,2}(\Om,\mu)\cap
L^{\infty}(\Omega,\mu)$.

%
%
To pass to any $v\in N^{1,2}_0(\Omega,\mu)\cap L^\infty(\Om,\mu)$, we
note that functions in $N^{1,2}_0(\Om,\mu)$ with compact support in
$\Om$ form a dense subclass of $N^{1,2}_0(\Om,\mu)$ (see \cite{Sh2}).
Hence, if $v$ is in $N^{1,2}_0(\Om,\mu)\cap L^\infty(\Om,\mu)$, we can find
a sequence of compactly supported functions $v_i$ from
$N^{1,2}_0(\Om,\mu)\cap L^\infty(\Om,\mu)$ such that $v_i\to v$ in
$N^{1,2}_0(\Om,\mu)$.  As before, we can also ensure that $v_i\to v$
$2$-capacity almost everywhere in $\Om$. Hence
\[
\int_\Om \scal{du}{dv}\, d\mu=\lim_{i\to\infty}\int_\Om \scal{du}{dv_i}\, d\mu
=-\lim_{i\to\infty}\int_\Om v_i\, d\mathscr{D}_\Om u,
\]
and then if $v$ is bounded in $\Om$ we have 
\[
\lim_{i\to\infty}\int_\Om v_i\, d\mathscr{D}_\Om u=\int_\Om v\, d\mathscr{D}_\Om u, 
\]
giving the desired result for all bounded functions in
$N^{1,2}_0(\Om,\mu)$.
\end{proof}

We shall also need the following lemma, which is based on the Lebesgue
decomposition of the measure $\mathscr{D}_\Om$ given by
\[
d\mathscr{D}_\Om u=f_ud\mu+d\mathscr{D}^s_\Om u,
\]
where $f_u=\frac{d\mathscr{D}_\Om u}{d\mu}$ is the absolutely
continuous part and $\mathscr{D}^s_\Om$ the singular part of
$\mathscr{D}_\Om$.

\begin{lemma}\label{Hip-Hip-Hip-Hooray!}
  Let $u\in {\rm Dom}(\mathscr{D}_\Om)$. If the singular part
  $\mathscr{D}^s_\Om$ of $\mathscr{D}_\Om u$ is zero and if the
  Radon--Nikodym derivative $f_u\in L^2(\Om,\mu)$, then $u\in{\rm
    Dom}(\Delta_\Om)$ with $\Delta_\Om u=f_u$.
\end{lemma}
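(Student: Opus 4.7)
The plan is to combine Proposition~\ref{hopeful} with a truncation argument. Under our hypotheses, $d\mathscr{D}_\Om u = f_u\, d\mu$ with $f_u\in L^2(\Om,\mu)$, so Proposition~\ref{hopeful} already yields
\[
\cE(u,v) = -\int_\Om v f_u\, d\mu
\]
for every $v\in N^{1,2}_0(\Om,\mu)\cap L^\infty(\Om,\mu)$. To conclude $u\in\mathrm{Dom}(\Delta_\Om)$, one must upgrade this identity to arbitrary $v\in N^{1,2}_0(\Om,\mu)$; the hypothesis $f_u\in L^2(\Om,\mu)$ is precisely what is needed so that the right-hand side is finite for all such $v\in L^2(\Om,\mu)$.

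Given $v\in N^{1,2}_0(\Om,\mu)$, I would introduce the truncations $v_k:=\max(-k,\min(v,k))$ for $k\in\N$. Because $N^{1,2}(X,\mu)$ is a lattice and $|v_k|\le|v|$ pointwise, each $v_k$ inherits the zero boundary values of $v$ $2$-quasieverywhere on $X\setminus\Om$, so $v_k\in N^{1,2}_0(\Om,\mu)\cap L^\infty(\Om,\mu)$. By the chain rule for the Cheeger differential applied to the $1$-Lipschitz function $t\mapsto\max(-k,\min(t,k))$ (a standard consequence of the locality of $d$, see \cite{FHK}), one has $dv_k = dv\cdot\chi_{\{|v|<k\}}$ $\mu$-a.e.\ on $\Om$, so $dv_k\to dv$ pointwise $\mu$-a.e.\ and $|dv_k|\le|dv|$.

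Applying Proposition~\ref{hopeful} to each $v_k$ gives
\[
\int_\Om\langle du, dv_k\rangle\, d\mu \,=\, -\int_\Om v_k f_u\, d\mu.
\]
On the left, $|\langle du, dv_k\rangle|\le|du|\,|dv|\in L^1(\Om,\mu)$ by Cauchy--Schwarz and the fact that $u,v\in N^{1,2}$; on the right, $|v_k f_u|\le|v|\,|f_u|\in L^1(\Om,\mu)$ since $v, f_u\in L^2(\Om,\mu)$. Dominated convergence therefore lets one pass $k\to\infty$ through both integrals, yielding
\[
\int_\Om\langle du,dv\rangle\, d\mu \,=\, -\int_\Om v f_u\, d\mu
\]
for every $v\in N^{1,2}_0(\Om,\mu)$. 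This is exactly the statement that $u\in\mathrm{Dom}(\Delta_\Om)$ with $\Delta_\Om u=f_u$.

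The only real subtlety is the invocation of the chain rule to identify $dv_k$ on the truncation level sets; once this is in hand (and it is already implicit in the Cheeger framework reviewed in Section~\ref{sect:Prel}), the rest is a routine truncation plus dominated convergence argument. The role of the two hypotheses is transparent: vanishing of the singular part converts the measure-valued equation into an $L^1$-pairing, while $f_u\in L^2$ promotes it to the right duality $N^{1,2}_0\leftrightarrow L^2$ that defines $\Delta_\Om$.
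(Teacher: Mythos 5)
Your proof is correct and takes essentially the same route as the paper's: the paper likewise obtains the identity $\cE(u,v)=-\int_\Om f_u v\,d\mu$ for bounded $v\in N^{1,2}_0(\Om,\mu)$ from Proposition~\ref{hopeful} and then appeals to ``a truncation argument and the Lebesgue dominated convergence theorem'' to pass to arbitrary $v\in N^{1,2}_0(\Om,\mu)$. You have simply written out in full the truncation and domination details that the paper leaves implicit.
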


\begin{proof} 
  From the discussion in Section~\ref{SecconstLapl}, if $\mathscr{D}_\Om^s u=0$
  and the absolutely continuous part is represented by $f_u\in L^2(\Om,\mu)$, then
\begin{equation}\label{intByPartsBdd}
   \cE(u,v)=-\int_\Om f_u v\,d\mu
\end{equation}
for all $v\in N^{1,2}_0(\Om,\mu)\cap L^\infty(\Om,\mu)$. When
$f_u\in L^2(\Om,\mu)$, we can use a truncation argument and the
Lebesgue dominated convergence theorem to show that
\eqref{intByPartsBdd} holds for any $v\in N^{1,2}_0(\Om,\mu)$. So we
conclude that $u\in {\rm Dom}(\Delta_\Om)$ and $\Delta_\Om u=f_u$.

%
%
\end{proof}

\begin{remark}
It can be seen that ${\rm Dom}(\Delta_\Om)\subset {\rm Dom}(\mathscr{D}_\Om)$;
moreover, if $u,v\in {\rm Dom}(\mathscr{D}_\Om)$ and $a\in\R$, the following hold true:
\begin{enumerate}
\item ${\rm spt}(\mathscr{D}_\Om u)\subset{\rm spt}(u)$; also,
if $u$ is constant on an open set $U$, then ${\rm spt}(\mathscr{D}_\Om u)\subset \Om\setminus U$;
\item $u+v, au\in{\rm Dom}(\mathscr{D}_\Om)$ with $\mathscr{D}_\Om(u+v)=\mathscr{D}_\Om u+\mathscr{D}_\Om v$ 
and $\mathscr{D}_\Om(au)=a\, \mathscr{D}_\Om u$;
\item if in addition $u$ and $v$ are bounded,
then $uv\in{\rm Dom}(\mathscr{D}_\Om)$ with 
\[
d \mathscr{D}_\Om(uv)= v\, d\mathscr{D}_\Om u+\, u\, d\mathscr{D}_\Om v+\, 2\, \scal{du}{dv}\, d\mu.
\] 
Note here that since $u,v$ are in $N^{1,2}(\Om)$, it follows that they are well-defined up to sets of 
$\text{Cap}_2$-zero; such null sets are not charged by $\mathscr{D}_\Om u$, $\mathscr{D}_\Om v$,
see Lemma~3.6.
\end{enumerate}
\end{remark}

\subsection{Inhomogeneous Dirichlet problem}
\label{sect:InhomogDP}

In this section we consider the inhomogeneous Dirichlet problem on
bounded open domains $\Om$ such that $\mu(X\setminus\Om)>0$; we assume
that a metric space $X$ satisfies a $(1,2)$-Poincar\'e
inequality. More precisely, given two functions $f\in L^2(X,\mu)$ and
$v\in N^{1,2}(X,\mu)$, we wish to find $u\in {\rm Dom}(\Delta_\Om)$
such that 
\begin{equation}\label{inHom}
\left\{
\begin{array}{l}
  \Delta_\Om u =f \ \textrm{ on } \Omega, \\
  \mbox{ } \\
  u-v \in N^{1,2}_0(\Om,\mu) . 
\end{array}
\right.
\end{equation}
By definition of $\Delta_\Om$, we interpret \eqref{inHom} in the weak
sense, i.e. $u$ is a solution of~\eqref{inHom} if $u-v\in N^{1,2}_0(\Om,\mu)$ and
for all $\pip\in N^{1,2}_0(\Om,\mu)$,
\[
\cE(u,\pip)=-\int_\Om f\pip\,d\mu.
\] 
As in the classical linear theory, a solution $u$ to \eqref{inHom} can
be written as the sum of two functions, $u_0$ and $u_1$, where $u_0$
is harmonic in $\Om$ such that $u_0-v\in N^{1,2}_0(\Om,\mu)$ and
$u_1\in N^{1,2}_0(\Om,\mu)$ is a particular solution to the problem 
$\Delta_\Om^D u_1=f$ with $u_1\in N^{1,2}_0(\Om,\mu)$.

The function $u_0$ is constructed in~\cite{KRS} as the minimum of the energy functional
\[
\min_{u-v\in N^{1,2}_0(\Om,\mu)} \int_\Om |du|^2\,d\mu.
\]
For the second part, we use the
functional $F:N^{1,2}_0(\Om,\mu)\to \R$ given by
\[
F(u)=\frac{1}{2}\int_\Om |du|^2 \,d\mu +\int_\Om uf\, d\mu,
\]
which is the sum of a linear functional and a strictly convex
energy. Hence $F$ itself is strictly convex. Then, if $F$ has a
minimum, it is unique and the minimum is the desired solution
$u_1$. To prove the existence, it is enough to use the Sobolev
inequality, i.e. if $u\in N^{1,2}_0(\Om,\mu)$ there exists a constant
$c_s>0$ such that
\[
\| u\|_2 \leq c_s\| du\|_2.
\]
Given that $\Om$ is bounded and $\mu(X\setminus\Om)>0$, the above
Sobolev inequality holds; we refer to \cite{HaKo} and \cite{KiSh} for
the details.  Then, for any $u\in N^{1,2}_0(\Om,\mu)$ we have that,
using the inequality $ab\leq \eps a^2/2 +b^2/2\eps$ with
$a,b,\eps>0$,
\begin{align*}
  F(u)=& \frac{1}{2}\| du\|_2^2 +\int_\Om uf\, d\mu \geq \frac{1}{2}\|
  du\|_2^2 -\|f\|_2 \|u\|_2 \geq
  \frac{1}{2}\| du\|_2^2 -c_s\|f\|_2 \|du\|_2 \\
  \geq& \left( \frac{1}{2} -\frac{\eps c_s}{2}\right) \| du\|_2^2
  -\frac{c_s}{2\eps}\|f\|^2_2.
\end{align*}
If we fix $\eps <1/c_s$, the preceding inequality gives us that $F$ is
bounded from below by $-2^{-1}\eps^{-1}c_s\|f\|_2^2$. Therefore,
\[
m=\inf_{u\in N^{1,2}_0(\Om,\mu)} F(u)
\]
is finite, and in particular, the infimum is a minimum as seen by
taking a minimizing sequence and applying Mazur's lemma. The
minimizing function $u_1$ is a weak solution to the desired equation,
that is
\begin{equation}\label{weakSol}
\int_\Om \scal{du_1}{d\pip}\,d\mu =-\int_\Om f\pip\, d\mu
\end{equation}
for all $\pip \in N^{1,2}_0(\Om,\mu)$. From \eqref{weakSol} it is
immediate to see that $u_1$ is the desired solution; in addition, if
in \eqref{weakSol} we take $\pip=u_1$, we have the Caccioppoli type estimate
\[
\| du_1\|_2\leq c_s \| f\|_2.
\]

\section{Functions of bounded variation and the perimeter measure}
\label{sect:BVPerimeter}

The aim of this section is to study some properties of the perimeter measure of a ball in
metric space. The properties we have in mind are needed in the
characterization of a singular function which will be constructed in
Section~\ref{sect:SingFunct}.

Following \cite{Mr}, the definition of the total variation of a function
$u\in L^1(X,\mu)$ is given by 
\begin{equation}\label{totVar}
  |Du|(X)=\inf \left\{
    \liminf_j \int_X g_{u_j}\, d\mu: u_j\in \Lip_{\textup{loc}}(X,\mu),\ u_j\to u \mbox{ in } L^1_{\textup{loc}}(X,\mu)
  \right\}.
\end{equation}
A function $u$ is said to have bounded variation, that is,
$u\in BV(X,\mu)$, if $|Du|(X)<\infty$. Moreover, a Borel set
$E\subset X$ with finite measure is said to have finite perimeter if
$\chi_E\in BV(X,\mu)$. We denote the perimeter measure of $E$ by
$P(E,X)=|D\chi_E|(X)$. 

To each function of bounded variation we associate a Borel regular
measure, its total variation measure. This measure is defined on every
open set $A\subset X$ using (\ref{totVar}), that is,
\[
|Du|(A)=\inf \left\{ \liminf_j \int_A g_{u_j}\,d\mu: u_j\in
  \Lip_{\textup{loc}}(A,\mu),\ u_j\to u \mbox{ in } L^1_{\textup{loc}}(A,\mu)
\right\}.
\]
We extend this measure to act on any Borel set $B\subset X$ by the
Carath\'eodory construction
\[
|Du|(B)=\inf \left\{
|Du|(A): A \mbox{ open and } B\subset A
\right\};
\]
for more details on this construction in the metric measure setting see~\cite[Theorem 3.4]{Mr}.

An equivalent definition can be also given by way of the Cheeger
differentiable structure as follows: 
\[
|D_cu|(X)=\inf \left\{ \liminf_j \int_X |du_j|\,d\mu: u_j\in
  \Lip_{\textup{loc}}(X,\mu),\ u_j\to u \mbox{ in } L^1_{\textup{loc}}(X,\mu)
\right\},
\]
and we shall say that $u$ has bounded total Cheeger variation if
$|D_cu|(X)<\infty$; a set with Cheeger finite perimeter is a Borel set
$E$ with finite measure such that $|D_c\chi_E|(X)<\infty$. 

By the results contained in \cite{C}, it follows that these two
definitions are equivalent, in the sense that $u$ has bounded
total variation if and only if it has bounded total Cheeger variation.
There exists a constant $c>1$ such that
\[
\frac1{c}|Du|(X)\leq |D_cu|(X)\leq c |Du|(X).
\]
Also using the Cheeger differentiable stucture, we have that $|D_cu|$
defines a finite Radon measure; the argument is similar to the case of
$|Du|$ and so we refer to \cite{Mr} for the proof.

A sequence of Lipschitz functions $(u_j)_j$ is said to converge in
variation to a function $u\in BV(X,\mu)$ if $u_j$ conveges to $u$ in
$L^1_{\textup{loc}}(X,\mu)$ and
\[
\int_X g_{u_j}\,d\mu \to |Du|(X).
\]
The preceding definition of total variation does not specify the
optimal sequence, i.e. the sequence which converges to $u$ in
variation.

It is proved in \cite[Theorem 3.8]{Mr} that the discrete convolution
gives an approximation that is only comparable in variation with the
optimal one. Note that for the optimal sequence,
the vector valued measures $d\vec \mu_j=du_j\, d\mu$, have uniformly
bounded total variation.  So, up to subsequences, they converge to
some vector valued finite measure $\vec \mu_\infty$.

\begin{remark}\label{rmkOpt}
  A sequence $(u_j)_j$ converging to $u$ in variation is optimal not
  only for the variation in $X$, but also for the variation in all 
  open subsets $A$ with $|Du|(\partial A)=0$. In fact, by definition,
  we have that
\[
|Du|(A)\leq \liminf_{j\to\infty} \int_A g_{u_j}\, d\mu, 
\]
but also that
\begin{align*}
  |Du|(X\setminus \overline A)&\leq 
  \limsup_{j\to \infty} \int_{X\setminus \overline A} g_{u_j}\,d\mu \leq 
  \limsup_{j\to \infty} \left( \int_X g_{u_j}\,d\mu -\int_A g_{u_j}\,d\mu
  \right) \\
  &= |Du|(X)-\liminf_{j\to \infty} \int_A g_{u_j}\,d\mu\leq |Du|(X\setminus A).
\end{align*}
The preceding inequalities are indeed equalities if $|Du|(\partial
A)=0$ and so $|Du|(X\setminus A)=|Du|(X\setminus\overline{A})$. Hence
the following two limits exist
\[
   \lim_{j\to \infty} \int_{X\setminus A} g_{u_j}\,d\mu=|Du|(X\setminus A),
\]
and
\begin{equation}\label{limiting}
    \lim_{j\to \infty} \int_A g_{u_j}\, d\mu =|Du|(A).
\end{equation}
\end{remark}

An important tool in the theory of functions of bounded variation is
the coarea formula. The version we work with in the present paper is a
direct consequence of \cite[Proposition~4.2]{Mr}. For any $u\in BV(X,\mu)$ 
and any Borel measurable function $f:X\to \R$, the following identities hold
\begin{equation}\label{coarea}
\int_X f\,d|Du| =\int_\R\int_X f(x)\, d|D\chi_{E_t}|(x)dt
\end{equation}
and 
\[
\int_X f\, d|D_cu| =\int_\R\int_X f(x)\, d|D_c\chi_{E_t}|(x)dt,
\] 
where $E_t=\{u>t\}$, $t\in\R$, is the super-level set of $u$. We point
out that in these formulae, due to the fact that the measures $|Du|$ 
and $|D_cu|$ are not absolutely continuous with respect to the measure
$\mu$, it is important to consider the function $f$ and not an
equivalent representative. Since the perimeter measure does
not charge sets with zero $1$--capacity, we can modify the function
$f$ on such negligible sets. If $u$ is Lipschitz, (\ref{coarea})
can be written as follows
\[
\int_X f g_u\, d\mu =\int_\R\int_X f\,d|D\chi_{E_t}|(x)dt.
\]
This follows by an argument contained in \cite[Theorem~6.2.2]{Ca} and summarized in
the following proposition. We will provide a proof here for the reader's convenience.

\begin{proposition} \label{prop:TotalVarMeasure}
  Let $u\in \Lip(X)$. Then the total variation measure $d|Du|$ is
  given by $g_ud\mu$.
\end{proposition}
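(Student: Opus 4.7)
The plan is to prove the equality of Borel measures $|Du|$ and $g_u\,\mu$ by verifying the matching two-sided inequality on every open set $A\subset X$; the Carath\'eodory construction used in the definition of $|Du|$ on Borel sets then yields the full identity of measures.

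First I would establish the upper bound $|Du|(A) \le \int_A g_u\,d\mu$ by taking the constant sequence $u_j \equiv u$ as an admissible approximation in the definition~\eqref{totVar} restricted to $A$: since $u$ is Lipschitz (hence in $\Lip_{\mathrm{loc}}(A,\mu)$) and $u_j \to u$ trivially in $L^1_{\mathrm{loc}}(A,\mu)$, we have $|Du|(A) \le \liminf_j \int_A g_{u_j}\,d\mu = \int_A g_u\,d\mu$. In particular, $|Du|$ is absolutely continuous with respect to $\mu$ with density bounded above by $g_u$.

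For the reverse inequality I would combine two coarea representations. The coarea formula~\eqref{coarea} applied with $f = \chi_A$ yields
\[
|Du|(A) \,=\, \int_\R |D\chi_{E_t}|(A)\,dt, \qquad E_t = \{u > t\}.
\]
On the other hand, the classical Fleming--Rishel-type identity for a Lipschitz function, valid in this setting under the standing $(1,1)$-Poincar\'e inequality (this is essentially the content of \cite[Theorem~6.2.2]{Ca}), reads
\[
\int_A g_u\,d\mu \,=\, \int_\R |D\chi_{E_t}|(A)\,dt.
\]
Comparing these two expressions gives $|Du|(A) = \int_A g_u\,d\mu$ on open sets $A$, and the Carath\'eodory construction then extends the equality to all Borel sets.

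The main obstacle is the Fleming--Rishel coarea formula for Lipschitz $u$, whose proof rests on the $\mu$-a.e.\ identification of the minimal $1$-weak upper gradient $g_u$ with the pointwise Lipschitz constant $\Lip u$ (mentioned in the paper as a consequence of the $(1,1)$-Poincar\'e inequality), together with a layer-cake decomposition of $\int_A g_u\,d\mu$ over the values of $u$. Once this coarea identity for Lipschitz functions is in hand, the proposition follows by straightforwardly equating two representations of the same integrated-perimeter expression.
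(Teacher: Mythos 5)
Your upper bound $|Du|(A)\le\int_A g_u\,d\mu$ via the constant competitor $u_j\equiv u$ is exactly the first step of the paper's proof and is fine. The problem is the reverse inequality. The identity you invoke as your key lemma, $\int_A g_u\,d\mu=\int_\R|D\chi_{E_t}|(A)\,dt$ for Lipschitz $u$, is not an independent input in this setting: the coarea formula \eqref{coarea}, cited from \cite{Mr}, already gives $|Du|(A)=\int_\R|D\chi_{E_t}|(A)\,dt$ for every BV function, so your lemma is literally equivalent to the statement $|Du|(A)=\int_A g_u\,d\mu$ that you are trying to prove. Indeed, the paper presents the Lipschitz coarea formula as a \emph{consequence} of Proposition~\ref{prop:TotalVarMeasure} combined with \eqref{coarea}, not the other way around, and \cite[Theorem~6.2.2]{Ca} is cited as containing the argument of the proposition itself rather than a freestanding Fleming--Rishel theorem. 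Your proposal therefore runs in a circle.

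Moreover, the sketch you give for the lemma does not close the gap. A layer-cake/Cavalieri decomposition together with truncation competitors only yields the inequality $\int_\R|D\chi_{E_t}|(A)\,dt\le\int_A g_u\,d\mu$, which via \eqref{coarea} is the upper bound you already have. The hard direction, $\int_A g_u\,d\mu\le|Du|(A)$, is a lower-semicontinuity statement about arbitrary approximating sequences, and the a.e.\ identification of $g_u$ with the pointwise Lipschitz constant does not by itself produce it. The missing idea, which is the core of the paper's proof, is: take a sequence $(u_j)_j$ with $u_j\to u$ in $L^1$ and $\int_X g_{u_j}\,d\mu\to|Du|(X)$, extract a weak limit $g_\infty\,d\mu$ of the measures $g_{u_j}\,d\mu$, verify its absolute continuity with respect to $\mu$, use Mazur's lemma together with \cite[Lemma~3.1]{KaS} to conclude that $g_\infty$ is a weak upper gradient of $u$ (hence $g_u\le g_\infty$ $\mu$-a.e.), and then localize on balls $B_r(x)$ with $\mu(\partial B_r(x))=0$ via Remark~\ref{rmkOpt} to identify $g_\infty$ with the density of $|Du|$. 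An argument of this type is indispensable; without it your proof is incomplete.
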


\begin{proof}
  Lipschitz continuity of $u$ implies that $|Du|$ is absolutely
continuous with respect to $\mu$ with density given by some function
$G_u$. To see this, note that $u_j=u$ is a possible competitor in the
definition of $|Du|$, and so for every open set $A\subset X$ (and hence
for every set $A\subset X$) we have
\[
   |Du|(A)\le \int_A g_u\, d\mu.
\]
Therefore the density function $G_u\le g_u$ $\mu$-a.e.
%
 
To prove the equality it suffices to
prove that the function $G_u$ is an upper gradient of $u$. We take a
sequence $(u_j)_j$ of Lipschitz functions converging to $u$ in
$L^1(X,\mu)$ and with
\[
\lim_{j\to\infty} \int_X g_{u_j}\,d\mu =|Du|(X).
\] 
The sequence of measures $g_{u_j}d\mu$ is bounded, and so, up to a
subsequence, it converges weakly to a measure $\mu_\infty$ that is
still absolutely continuous with respect to $\mu$,
i.e. $d\mu_\infty=g_\infty \, d\mu$.  
To see that $\mu_\infty$ is absolutely continuous with respect to
$\mu$, it suffices to show that whenever $E\subset X$ is
compact with $\mu(E)=0$, we have $\mu_\infty(E)=0$. To this end, we
note that because $E$ is compact, for every $\eps>0$ we can 
cover $E$ with a finite number of balls $B_i^\eps$ with $\mu(\partial B_i^\eps)=0$
so that the open set $A_\eps=\cup_i B_i^\eps$ contains $E$ and is such
that $\mu(A_\eps)<\eps$ and $\mu(\partial A_\eps)=0$. 
It follows that $|Du|(\partial A_\eps)=0$
because of the absolute continuity of $|Du|$ established
above. Therefore, by Remark~\ref{rmkOpt} we have 
\[
\lim_{j\to \infty} \int_{A_\eps} g_{u_j}\,d\mu = |Du|(A_\eps)\leq \int_{A_\eps} g_u\, d\mu
\leq L\eps,
\]
which implies that $\mu_\infty(A_\eps)\le L\eps$. It follows that
$\mu_\infty(E)=0$. For more general sets $E\subset X$ with $\mu(E)=0$,
there is a Borel set $E_0$, containing $E$, such that
$\mu(E_0)=0$. Because $\mu_\infty$ is a Borel measure,
$\mu_\infty(E_0)$ is the supremum of all $\mu_\infty(K)$, the supremum
taken over all compact sets $K\subset E_0$. Given that $\mu(K)=0$ and
so $\mu_\infty(K)=0$, it follows that $\mu_\infty(E_0)=0$ and so
$\mu_\infty(E)=0$. Then $g_{u_j}$ converges to $g_\infty$ weakly in
$L^1(X,\mu)$.

To summarize, we have $u_j\to u$ in $L^1(X)$ and $g_{u_j}\to g_{\infty}$
weakly in $L^1(X)$. Now, an invocation of the Mazur lemma, together with
\cite[Lemma~3.1]{KaS} shows that $g_\infty$ is a weak upper gradient of $u$;
it follows that $g_u\le g_\infty$ a.e.~in $X$.   

Given that balls have finite $\mu$-measure, when $x\in X$, for almost every $r>0$
we have $\mu(\partial B_r(x))=0$. For such $r>0$,
by~\eqref{limiting} with $A=B_r(x)$,  
\[
|Du|(B_r(x))\le \int_{B_r(x)}g_\infty\, d\mu\le
\liminf_{j\to\infty}\int_{B_r(x)}g_{u_j}\,
d\mu=|Du|(B_r(x))=\int_{B_r(x)} G_u\, d\mu,
\]
and so we get $g_\infty(x)=G_u(x)$ for $x\in X$ that are Lebesuge points for
both $G_u$ and $g_\infty$. It follows that $G_u\le g_u\le g_\infty=G_u$ a.e.~in $X$,
from which the claim follows.

\end{proof}

From now on, we assume that $X$ is also a geodesic space. This
  is not an overly restrictive assumption, since $X$, by the virtue of
  supporting a Poincar\'e inequality and being complete, is a
  quasiconvex space. It follows that in our setting, a bi-Lipschitz
  change in the metric does result in a geodesic space. 

Let us fix a point $x_0\in X$. Since $X$ is assumed to be a geodesic
space, by the results in~\cite{C}, the function $u_{x_0}(x)=d(x,x_0)$
is Lipschitz with $\text{Lip}(u_{x_0})=g_{u_{x_0}}\equiv1$. Moreover,
we may write $B_t(x_0)=\{u_{x_0}<t\}$ and so by the coarea formula
(\ref{coarea}) we obtain for any positive $r>0$ that
\[
\int_0^r P(B_t(x_0),X)\,dt =\mu(B_r(x_0)) <\infty.
\]
Thus the map $t\mapsto P(B_t(x_0),X)$ is a measurable locally
integrable function. This implies that for almost every $r>0$,
\begin{align*}
  P(B_r(x_0),X) & =\lim_{\eps \to 0} \frac{1}{\eps} \int_{r-\eps}^r
  P(B_t(x_0),X)\, dt = \lim_{\eps \to 0}\frac{1}{\eps} \int_r^{r+\eps}
  P(B_t(x_0),X)\, dt \\
  & = \lim_{\eps \to 0} \frac{1}{2\eps}
  \int_{r-\eps}^{r+\eps} P(B_t(x_0),X)\, dt.
\end{align*}
In particular, for almost every $r>0$ the perimeter measure coincides
with the Minkowski content
\begin{equation}\label{MinContInt}
P(B_r(x_0),X)=\lim_{\eps\to 0}
\frac{\mu(B_r(x_0))-\mu(B_{r-\eps}(x_0))}{\eps}. 
\end{equation}
For a ball $B_r(x_0)$ satisfying \eqref{MinContInt}, we can consider the sequence of functions 
$(u_\eps)_{\eps>0}$, where
\begin{equation}\label{optimalseq}
u_\eps(x)=\max\left\{ \min
\left\{\frac{r-d(x_0,x)}{\eps},1\right\},0\right\}= 
\min\left\{\frac{1}{\eps} d(x,X\setminus B_r(x_0)),1\right\}.
\end{equation}
For a such function $u_\eps$, we have that
$g_{u_\eps}=\frac{1}{\eps}\chi_{B_r(x_0)\setminus B_{r-\eps}(x_0)}$
and
\[
\int_X g_{u_\eps}\,d\mu =\frac{1}{\eps} \int_{B_r(x_0)\setminus B_{r-\eps}(x_0)}\, d\mu
\rightarrow P(B_r(x_0),X),
\]
that is, the sequence $u_\eps$ converges to $\chi_{B_r(x_0)}$ in
variation. This also means that the sequence of vector valued measures
$(|du_\eps| d\mu)_\eps$ is equibounded
\[
|D_c u_\eps|(X)=\int_X |du_\eps|\,d\mu \leq c <\infty
\]
for some positive constant $c$. Therefore there exists a subsequence
$\eps_j\to 0$ such that, setting $u_j=u_{\eps_j}$, the sequence of vector-valued
measures $du_j\, d\mu$ is weakly convergent to some vector valued
measure $\vec\mu_\infty$. This measure is absolutely continuous with
respect to both $|D_c\chi_{B_r(x_0)}|$ and
$|D\chi_{B_r(x_0)}|$. Indeed, if, for instance,
$|D_c\chi_{B_r(x_0)}|(E)=0$, where $E\subset X$ is compact, we can
find for every $\eps>0$ an open set $A_\eps\supset E$ such that
$|D_c\chi_{B_r(x_0)}|(\partial A_\eps)=0$ and
$|D_c\chi_{B_r(x_0)}|(A_\eps)<\eps$. Reasoning as in the proof of
Proposition~\ref{prop:TotalVarMeasure} we may conclude that
\[
|\vec \mu_\infty|(E)\leq |\vec \mu_\infty|(A_\eps)\leq
\liminf_{j\to\infty}\int_{A_\eps} |du_j|\,d\mu
=|D_c\chi_{B_r(x_0)}|(A_\eps)<\eps.
\]
We may hence write
\[
\vec\mu_\infty =\nu_{x_0,r} |D_c\chi_{B_r(x_0)}| =\sigma_{x_0,r}
|D\chi_{B_r(x_0)}|
\]
for some vector-valued $|D_c\chi_{B_r(x_0)}|$-measurable function $\nu_{x_0,r}$ and
$|D\chi_{B_r(x_0)}|$-measurable function $\sigma_{x_0,r}$. In particular, the function $\sigma_{x_0,r}$ plays
the role of the normal vector at the boundary of $B_r(x_0)$. In this
context, it is not clear from the definition if it is a unit
vector. For the sake of simplicity, if no confusion may arise, we
simply denote the functions $\nu_{x_0,r}$ and $\sigma_{x_0,r}$ by
$\nu$ and $\sigma$, respectively.

We can summarize the previous construction in the following definition.

\begin{definition}\label{def:regular}
  We shall call a ball $B_r(x_0)$ {\it regular} if the equation
  \eqref{MinContInt} is valid and if there exists a
  sequence $\eps_j\to 0$ such that for the sequence of functions
  $u_j=u_{\eps_j}$, referred to as an {\it optimal sequence} and defined
  in \eqref{optimalseq}, the following hold true:
\begin{enumerate}
\item[(1)] $g_{u_j}\, d\mu$ converges weakly to $d|D\chi_{B_r(x_0)}|$;
\item[(2)] $du_j  \, d\mu$ converges weakly to $\sigma\, d|D\chi_{B_r(x_0)}|$ for
  some $|D\chi_{B_r(x_0)}|$-measurable vector-valued function $\sigma=\sigma_{x_0,r}$.
\end{enumerate} 
\end{definition}

Almost every ball is regular in the sense that
for every $x_0\in X$ and for almost every $r>0$ the ball $B_r(x_0)$ is
regular. Howeover, the vector $\sigma$ is not a priori unique and it
is not clear whether it depends on the sequence $\eps_j$ we 
consider. 

The given notion of regularity relates to interior regularity of a ball. One can also 
consider the notions of outer and two-sided regularity
and obtain that for almost every radius $r>0$ the ball $B_r(x_0)$ has
inner, outer, and two-sided regularity.

\section{Divergence measures and generalized Gauss--Green formulas}
\label{sect:GG}

Here we consider divergence-measure fields, i.e. a class of vector fields
$\vec{F}:X\to\R^k$ belonging to the space $L^2(\Om,\R^k,\mu)$ and for
which $\diver \vec F$ is a measure. In the metric space framework of
the present paper, we generalize some results obtained by Ziemer in
\cite{Z}. 

Previously, Thompson and Thompson in \cite{TT} constructed a
divergence form in the setting of Minkowski spaces, and they proved a
Minkowski space analogue of the Gauss--Green theorem.

The aim of this section is to study the operator $\diver$ on
$\Lip_c(\Om)$ with values in the space of measures, that is, we want
to define for $\vec F\in L^2(\Om,\R^k,\mu)$ the distribution
\begin{equation}\label{div}
    \scal{u}{\diver \vec{F}}:=-\int_\Om \scal{\vec{F}}{du}\, d\mu, 
\end{equation} 
for $u\in \Lip_c(\Om)$. In the following we adopt the notation from \cite{CTZ} and \cite{Z}.

\begin{definition} \label{def:divF}
  We say that $\vec{F}\in L^2(\Om,\R^k,\mu)$ is in the class
  $\DMd(\Om)$ if there is a signed finite Radon measure, denoted by
  $\diver\vec{F}\in \mathscr{M}_b(\Om)$, on $\Om$ such that
\begin{equation}\label{defDivergenceMeas}
\int_\Om u\, d\diver\vec{F}\, =\, -\int_\Om\scal{\vec{F}}{du}\, d\mu
\end{equation}
for all $u\in \Lip_c(\Om)$.
\end{definition}

\begin{remark} \label{rmk:LiptoN}
  By an argument similar to the proof of Lemma~\ref{nulls}, we can
  prove that if $\vec F\in \DMd(\Om)$, then the measure $\diver \vec
  F$ does not charge sets with zero $2$--capacity. Therefore, condition
  \eqref{defDivergenceMeas} can be extended to any $u\in
  N^{1,2}_0(\Om,\mu)\cap L^\infty(\Om,\mu)$.
\end{remark}

Note that when $\vec{F}\in L^2(\Om,\R^k,\mu)$ the operator $T_{\vec
  F}:N^{1,2}_0(\Om,\mu)\to \R$ given by
\[
  T_{\vec F}(u)= \int_\Om \scal{\vec{F}}{du}\, d\mu 
\]
is a bounded linear operator on the Hilbert space
$(N^{1,2}_0(\Om,\mu),\cE_1)$. Therefore, by the Riesz representation
theorem, there exists a function $v\in N^{1,2}_0(\Om,\mu)$ such that
whenever $u\in N^{1,2}_0(\Om,\mu)$, $T_{\vec
  F}(u)=\cE_1(v,u)$. Hence, if $\vec{F}\in \DMd(\Om)$, then
for all $u\in N^{1,2}_0(\Om,\mu)$ we obtain
\[
\int_\Om\scal{\vec{F}}{du}\, d\mu=\int_\Om uv \, d\mu + \int_\Om
\scal{du}{dv}\, d\mu,
\]
that is,
\[
\int_\Om u\, d\diver \vec{F}+\int_\Om u\, v\, d\mu \, =\, -\int_\Om \scal{du}{dv}\, d\mu.
\]
It follows that $v\in {\rm Dom}(\mathscr{D}_\Om)$ with
\[
 d\mathscr{D}_\Om v=-v \, d\mu - d\diver\vec F.
\]
This proves the following lemma.

\begin{lemma}
Given a domain $\Om\subset X$, a map $\vec{F}\in L^2(\Om,\R^k,\mu)$ is in the class 
$\DMd(\Om)$ if and only if
 there exists $v\in {\rm Dom}(\mathscr{D}_\Om)$ such that
 \[
  d\mathscr{D}_\Om v=-v\, d\mu- d\diver\vec{F}
 \]
in the sense of distributions on $N^{1,2}_0(\Om,\mu)$.
\end{lemma}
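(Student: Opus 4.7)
\emph{Plan.} The forward direction is essentially carried out in the paragraph immediately preceding the statement; I would simply formalize that discussion, and then observe that the reverse direction is obtained by reading the same chain of equalities backwards. The two main ingredients are the Riesz representation theorem on the Hilbert space $(N^{1,2}_0(\Om,\mu),\cE_1)$ and the distributional interpretation of $\diver\vec F$ given by \eqref{div}.

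For the ($\Rightarrow$) implication, assume $\vec F\in\DMd(\Om)$. Define $T_{\vec F}(u):=\int_\Om\scal{\vec F}{du}\,d\mu$ for $u\in N^{1,2}_0(\Om,\mu)$. By Cauchy--Schwarz and the equivalence of the $N^{1,2}$-norm with the $\cE_1$-norm,
\[
|T_{\vec F}(u)|\le \|\vec F\|_{L^2(\Om,\R^k,\mu)}\,\|g_u\|_{L^2(\Om,\mu)} \le C\,\|\vec F\|_{L^2(\Om,\R^k,\mu)}\,\cE_1(u,u)^{1/2},
\]
so $T_{\vec F}$ is bounded on the Hilbert space $(N^{1,2}_0(\Om,\mu),\cE_1)$. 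Riesz representation supplies a unique $v\in N^{1,2}_0(\Om,\mu)$ with $T_{\vec F}(u)=\cE_1(v,u)$ for every $u$. Expanding $\cE_1=\cE+(\cdot,\cdot)_2$ and testing against $u\in\Lip_c(\Om)$, Definition~\ref{def:divF} then rewrites the identity as
\[
\cE(v,u)=-\int_\Om uv\,d\mu-\int_\Om u\,d\diver\vec F
\]
for all $u\in\Lip_c(\Om)$. The right-hand side is integration against the bounded signed Borel measure $-v\mu-\diver\vec F\in\mathscr M_b(\Om)$, and so the definition of $\mathrm{Dom}(\mathscr D_\Om)$ given in \eqref{Laplacian} yields $v\in\mathrm{Dom}(\mathscr D_\Om)$ with $d\mathscr D_\Om v=-v\,d\mu-d\diver\vec F$.

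For the ($\Leftarrow$) implication, assume $v\in\mathrm{Dom}(\mathscr D_\Om)$ and that the identity $d\mathscr D_\Om v=-v\,d\mu-d\diver\vec F$ holds distributionally on $N^{1,2}_0(\Om,\mu)$, where $d\diver\vec F$ is the distribution from \eqref{div}. For any $u\in\Lip_c(\Om)$, on the one hand the defining property of $\mathscr D_\Om$ gives $-\int_\Om u\,d\mathscr D_\Om v=\cE(v,u)=\int_\Om\scal{dv}{du}\,d\mu$, and on the other hand the assumed identity pairs with $u$ to give $-\int_\Om u\,d\mathscr D_\Om v=\int_\Om uv\,d\mu+\int_\Om u\,d\diver\vec F$. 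Substituting the distributional meaning $\int_\Om u\,d\diver\vec F=-\int_\Om\scal{\vec F}{du}\,d\mu$, one solves for $\int_\Om\scal{\vec F}{du}\,d\mu$ as integration of $u$ against the bounded signed Borel measure $v\mu+\mathscr D_\Om v$ (up to a sign). This exhibits $\diver\vec F$ as a finite signed measure in the sense of Definition~\ref{def:divF}, so $\vec F\in\DMd(\Om)$.

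There is no deep obstacle here; the content is essentially the Riesz trick used to split $T_{\vec F}$ into an $L^2$-part and a Dirichlet-energy part. The only point that requires mild care is ensuring that $v\,d\mu$ is truly a bounded signed Borel measure, so that the sum $-v\mu-\diver\vec F$ lies in $\mathscr M_b(\Om)$; this is automatic when $\mu(\Om)<\infty$ (since then $L^2(\Om,\mu)\subset L^1(\Om,\mu)$) and is implicit in the interpretation of the identity on $\mathrm{Dom}(\mathscr D_\Om)$. One should also invoke Remark~\ref{rmk:LiptoN} to pass freely between $\Lip_c(\Om)$ and $N^{1,2}_0(\Om,\mu)\cap L^\infty(\Om,\mu)$ as test functions, but this extension was already established for $\mathscr D_\Om$ in Proposition~\ref{hopeful}.
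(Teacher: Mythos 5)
Your argument is correct and is essentially the paper's own proof: the forward direction is exactly the Riesz-representation computation on $(N^{1,2}_0(\Om,\mu),\cE_1)$ carried out in the paragraph preceding the lemma, and the reverse direction is the same chain of identities read backwards, which the paper leaves implicit. Your remark that one must know $v\,d\mu$ is a finite measure (so that $-v\,d\mu-d\diver\vec F$ actually lies in $\mathscr{M}_b(\Om)$) is a fair observation about a point the paper also passes over silently, and does not affect the verdict.
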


We can also state the following simple properties of the divergence measure.

\begin{lemma}
  Let $\vec F\in \DMd(\Om)$. Then ${\rm spt}(\diver\vec F)\subset {\rm
    spt}(\vec F)$. Moreover, if $v\in{\rm Dom}(\mathscr{D}_\Om)$, then
  $dv\in\DMd(\Om)$ with $\diver\, dv=\mathscr{D}_\Om v$.
\end{lemma}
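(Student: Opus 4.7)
The statement has two parts: a support inclusion and the claim that gradients of functions in $\mathrm{Dom}(\mathscr{D}_\Om)$ are divergence-measure fields. Both will follow rather directly from the definition of $\DMd(\Om)$ in~\eqref{defDivergenceMeas} together with the defining identity of $\mathscr{D}_\Om$ in~\eqref{Laplacian}.

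For the support inclusion, I would argue by contraposition. Suppose $x_0\in \Om\setminus\mathrm{spt}(\vec F)$. By definition of the support of an $L^2$ vector field there is an open neighborhood $U\subset\Om$ of $x_0$ such that $\vec F=0$ $\mu$-a.e.\ on $U$. For every $u\in\mathrm{Lip}_c(U)$ the Cheeger differential $du$ is supported in $\mathrm{spt}(u)\subset U$, so
\[
\int_\Om \langle \vec F,du\rangle\, d\mu=0,
\]
and hence by~\eqref{defDivergenceMeas} we get $\int_\Om u\, d\diver\vec F=0$ for every $u\in\mathrm{Lip}_c(U)$. Since $\mathrm{Lip}_c(U)$ is dense in $C_c(U)$ (this is exactly the density used in the definition of the total variation right after~\eqref{Laplacian}) and $\diver\vec F$ is a signed Radon measure, this forces $\diver\vec F\res U=0$, so $x_0\notin\mathrm{spt}(\diver\vec F)$.

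For the second part, let $v\in\mathrm{Dom}(\mathscr{D}_\Om)$. Since $v\in N^{1,2}(\Om,\mu)$, the Cheeger gradient $dv$ lies in $L^2(\Om,\R^k,\mu)$, so it is a legitimate candidate for membership in $\DMd(\Om)$. By the very definition of $\mathscr{D}_\Om$ in~\eqref{Laplacian}, for every test function $u\in\mathrm{Lip}_c(\Om)$ one has
\[
\int_\Om \langle dv, du\rangle\, d\mu=\cE(v,u)=-\int_\Om u\, d\mathscr{D}_\Om v.
\]
Comparing this identity with~\eqref{defDivergenceMeas} shows that $dv\in\DMd(\Om)$ and that $\diver\, dv=\mathscr{D}_\Om v$, which is the desired conclusion.

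I do not anticipate any serious obstacle: both parts are essentially bookkeeping against the relevant definitions. The only point that deserves a moment of care is the density statement used in the support argument, but this has already been recorded in the paper immediately after the definition of the total variation of a signed Radon measure in Section~\ref{sect:measureLap}, so it can be quoted directly.
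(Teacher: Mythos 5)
Your proposal is correct and follows essentially the same route as the paper: the support inclusion comes from testing against compactly supported Lipschitz functions supported away from $\mathrm{spt}(\vec F)$ (the paper computes $|\diver\vec F|(\Om\setminus\mathrm{spt}(\vec F))$ directly from the sup formula for the total variation, while you localize to neighborhoods and invoke density of $\Lip_c$ in $C_c$ — the same observation either way), and the second part is exactly the comparison of the defining identities \eqref{Laplacian} and \eqref{defDivergenceMeas} on $\Lip_c(\Om)$. No gaps.
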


\begin{proof}
  The first statement follows by considering $A=\Om\setminus {\rm
    spt}(\vec F)$, so we have that 
  \begin{align*}
  |\diver \vec F|(A)&=\sup \left\{\int_A \varphi\, d\diver \vec F:
    \varphi\in \Lip_c(A),\, \|\varphi\|_\infty\leq 1
  \right\}\\
  &= \sup \left\{ \int_A \scal{\vec F}{d\varphi}\,d\mu: \varphi\in
    \Lip_c(A),\, \|\varphi\|_\infty\leq 1 \right\}=0.
\end{align*}
For the second part, let $v\in {\rm Dom}(\mathscr{D}_\Om)$. Then there
exists a signed finite Radon measure $\mathscr{D}_\Om v\in \mathscr{M}_b(X)$
such that
\[
\int_\Om \scal{dv}{du}\,d\mu=-\int_\Om u\,d\mathscr{D}_\Om v
\]
for all $u\in N^{1,2}_0(\Om,\mu)\cap L^\infty(\Om,\mu)$. From this and by
Remark~\ref{rmk:LiptoN}, we may conclude that $dv\in \DMd(\Om)$ with
$\diver dv=\mathscr{D}_\Om v$, and the claim follows.
\end{proof}

We now state the following two propositions on the Gauss--Green type
integration by parts formula for vector fields in $\DMi(\Om)$, that is
for vector fields $\vec F$ in $L^\infty(\Om,\R^k,\mu)\cap \DMd(\Om)$. 

\begin{proposition}\label{dmiCont}
  Let $\vec{F}\in {\cal DM}^\infty(\Om)\cap C(\Om,\R^k)$ and
  $B_r(x_0)\subset \Om$ be a regular ball. The following Gauss--Green
  formula
\[
\int_{B_r(x_0)} f\,  d\diver\vec{F}+\int_{B_r(x_0)} \scal{\vec F}{df}\, d\mu= 
-\int_\Om f \scal{\vec F}{\sigma_{x_0,r}}\, d|D\chi_{B_r(x_0)}|, 
\]
holds for all $f\in \Lip_c(\Om)$. If the support of $\vec F$ is disjoint from
$\partial B_r(x_0)$, then the requirement that $\vec{F}$ is continuous
can be removed.
\end{proposition}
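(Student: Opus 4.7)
The plan is to test the defining relation~\eqref{defDivergenceMeas} for $\diver\vec F$ against the product $fu_j$, where $u_j=u_{\varepsilon_j}$ is an optimal sequence for $B_r(x_0)$ as in Definition~\ref{def:regular}. Since $f\in\Lip_c(\Omega)$ and each $u_j$ is bounded and Lipschitz (vanishing outside $B_r(x_0)$), the product $fu_j$ lies in $\Lip_c(\Omega)$ and is an admissible test function. Applying the Leibniz rule $d(fu_j)=u_j\,df+f\,du_j$ gives
\[
\int_\Omega fu_j\,d\diver\vec F \;=\; -\int_\Omega u_j\scal{\vec F}{df}\,d\mu \;-\; \int_\Omega f\scal{\vec F}{du_j}\,d\mu,
\]
and the remainder of the argument consists of passing to the limit $j\to\infty$ in each of the three integrals.

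For the left-hand side and the first integral on the right I would appeal to dominated convergence. Since $0\le u_j\le 1$ and $u_j\to\chi_{B_r(x_0)}$ pointwise on $X$ (equal to $1$ on $B_{r-\varepsilon_j}(x_0)$ and vanishing on $X\setminus B_r(x_0)$, in particular on $\partial B_r(x_0)$), the finiteness of $|\diver\vec F|$ and the $L^1$-integrability of $\|\vec F\|_\infty\,|df|$ yield
\[
\int_\Omega fu_j\,d\diver\vec F\to\int_{B_r(x_0)}f\,d\diver\vec F,\qquad \int_\Omega u_j\scal{\vec F}{df}\,d\mu\to\int_{B_r(x_0)}\scal{\vec F}{df}\,d\mu.
\]
For the remaining term I would invoke part~(2) of Definition~\ref{def:regular}: the vector-valued measures $du_j\,d\mu$ converge weakly to $\sigma_{x_0,r}\,d|D\chi_{B_r(x_0)}|$. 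Because $\vec F$ is continuous on $\Omega$ and $f\in\Lip_c(\Omega)$, the product $f\vec F$ is continuous with compact support in $\Omega$, hence a legitimate test vector field, giving
\[
\int_\Omega f\scal{\vec F}{du_j}\,d\mu\to\int_\Omega f\scal{\vec F}{\sigma_{x_0,r}}\,d|D\chi_{B_r(x_0)}|.
\]
Combining the three limits yields the formula.

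When $\text{spt}(\vec F)\cap\partial B_r(x_0)=\emptyset$, the continuity hypothesis becomes unnecessary. On one hand, $|D\chi_{B_r(x_0)}|$ is supported on $\partial B_r(x_0)$, since $g_{u_j}$ vanishes on any compact subset of the open ball for $j$ large enough; hence the right-hand side of the claimed identity is zero. On the other hand, $du_j$ is supported in the thin shell $B_r(x_0)\setminus B_{r-\varepsilon_j}(x_0)$, which by the disjointness hypothesis is eventually disjoint from $\text{spt}(\vec F)$; thus $\int_\Omega f\scal{\vec F}{du_j}\,d\mu=0$ for $j$ large, and no weak limit need be taken for that term. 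The remaining two limits go through exactly as above. The main obstacle in the general case is precisely the third limit step: the convergence of $du_j\,d\mu$ to $\sigma_{x_0,r}\,d|D\chi_{B_r(x_0)}|$ is only in the weak sense of vector-valued Radon measures, so the test field $f\vec F$ must be continuous, which explains both the role of the continuity assumption and the fact that the disjoint-support case can dispense with it.
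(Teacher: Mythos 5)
Your argument is correct and follows essentially the same route as the paper: test the divergence identity against $fu_j$ for the optimal sequence, split via the Leibniz rule, and pass to the limit using the finiteness of $|\diver\vec F|$, dominated convergence, and the weak convergence of $du_j\,d\mu$ against the continuous compactly supported field $f\vec F$. The only cosmetic differences are that the paper bounds the first limit by $\|f\|_\infty\,|\diver\vec F|(B_r(x_0)\setminus B_{r-\eps_j}(x_0))$ rather than invoking dominated convergence, and that you spell out the disjoint-support case, which the paper only asserts; both are fine.
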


\begin{proof}
  We can consider an optimal sequence of locally Lipschitz functions
  $(u_j)_j$ converging to $\chi_{B_r(x_0)}$ in variation as in
  \eqref{optimalseq}. Then we have by the Leibniz rule that
\begin{align*}
\int_\Om u_j f\, d\diver\vec F &=-\int_\Om \scal{\vec F}{d(u_jf)}\, d\mu
 =-\int_\Om u_j\scal{\vec F}{df}\, d\mu
-\int_\Om f\scal{\vec F}{du_j}\, d\mu.
\end{align*}
We notice that 
\begin{equation}\label{tendToZero}
  \left|\int_\Om u_j f\, d\diver\vec F -\int_{B_r(x_0)} f\, d\diver\vec F\right|
  \leq \|f\|_\infty \left|\diver \vec F\right|\left(B_r(x_0)\setminus B_{r-\eps_j}(x_0)\right),
\end{equation}
and that the right-hand side of \eqref{tendToZero} tends to 0 as $j\to
\infty$. Thus we may conclude that
\[
\lim_{j\to \infty} \int_\Om u_j f\, d\diver\vec F =\int_{B_r(x_0)} f\, d\diver\vec F.
\]
Also, by the fact that both $\vec{F}$ and $df$ are in $L^\infty(\Om)$,
by an application of the Lebesgue dominated convergence theorem we
obtain
\[
   \lim_{j\to\infty}  \int_\Om u_j \scal{\vec F}{df}\, d\mu =\int_{B_r(x_0)}\scal{\vec F}{df}\, d\mu.
\]
We also have, due to the continuity of $\vec F$, that
\[
   \lim_{j\to \infty} \int_\Om f\scal{\vec F}{du_j} \, d\mu 
      = \int_\Om f\scal{\vec F}{\sigma_{x_0,r}} d|D\chi_{B_r(x_0)}|,
\]
and so the proof is completed.
\end{proof}

\begin{remark}
  We point out that property \eqref{tendToZero} is a consequence of
  the choice of an optimal sequence $(u_j)_j$ to be an inner
  approximation of the characteristic function $\chi_{B_r(x_0)}$. If
  we chose, for instance, an outer approximation, then the preceding
  integration by parts formula would be as follows
\[
\int_{\overline B_r(x_0)} f\, d\diver\vec{F}+\int_{B_r(x_0)}
\scal{\vec F}{df}\, d\mu= -\int_\Om f\scal{\vec F}{\tilde
  \sigma_{x_0,r}}\, d|D\chi_{B_r(x_0)}|,
\]
for all $f\in \Lip_c(\Om)$, where $\tilde\sigma_{x_0,r}$ is the
density of the vector-valued measure obtained as a weak limit by way
of the gradients of this new sequence as in
Definition~\ref{def:regular}.

We also point out that the previous proposition can be extend to more
general sets $E\subset\Om$ with finite perimeter whenever a Minkowski
content characterization of the perimeter, analogous to
\eqref{MinContInt}, holds. In this case, the boundary of $E$ has to be
considered as the essential, or the measure-theoretic, boundary of
$E$, i.e. the set of all points at which the density of $E$ is neither
0 nor 1.
\end{remark}

We prove the following main theorem of this section, which is a generalization of
Proposition~\ref{dmiCont}, without requiring continuity of the vector
field. This theorem should be thought of as the generalization
of the Gauss--Green theorem of the Euclidean setting.

\begin{theorem}\label{Question1}
  Let $\vec F\in {\cal DM}^\infty(\Om)$ and let $B_r(x_0)\subset \Om$
  be a regular ball. Then the following extended Gauss--Green formula
\begin{equation}\label{IntByParts}
  \int_{B_r(x_0)} f\, d\diver \vec F +\int_{B_r(x_0)} \scal{\vec F}{df}\, \dmu=
  \int_\Om f\, (\vec F\cdot \nu)^-_{\partial B_r(x_0)}\,d|D\chi_{B_r(x_0)}|, 
\end{equation}
holds for all $f\in N^{1,2}(\Om,\mu)\cap L^{\infty}(\Om,\mu)$, where
$(\vec F\cdot \nu)^-_{\partial B_r(x_0)}$ is the interior normal trace
of $\vec F$ on $\partial B_r(x_0)$.
\end{theorem}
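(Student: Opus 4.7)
The plan is to mimic the argument of Proposition~\ref{dmiCont}, using an optimal sequence $(u_j)_j$ from \eqref{optimalseq} as a surrogate for $\chi_{B_r(x_0)}$ but, in the absence of continuity of $\vec F$, using the boundary term itself to \emph{define} the normal trace. For $f\in \Lip_c(\Om)$ the product $u_jf$ lies in $\Lip_c(\Om)$ (it is supported in $B_r(x_0)$), so Definition~\ref{def:divF} applied to $u_jf$ together with the Leibniz rule yields
\[
\int_\Om u_jf\, d\diver\vec F = -\int_\Om u_j\scal{\vec F}{df}\, \dmu - \int_\Om f\scal{\vec F}{du_j}\, \dmu.
\]
Since $B_r(x_0)$ is open, $u_j\to\chi_{B_r(x_0)}$ pointwise on $X$ and $|u_j|\le 1$, so Lebesgue dominated convergence forces the left-hand side to tend to $\int_{B_r(x_0)}f\, d\diver\vec F$ against the finite signed measure $\diver\vec F$, and the first term on the right to tend to $-\int_{B_r(x_0)}\scal{\vec F}{df}\, \dmu$ (dominated by $\|\vec F\|_\infty|df|\chi_{\operatorname{spt} f}\in L^1(\mu)$). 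By subtraction the third term also converges; setting $T(f):=-\lim_j\int_\Om f\scal{\vec F}{du_j}\, \dmu$ gives a linear functional on $\Lip_c(\Om)$ whose value is exactly the left-hand side of \eqref{IntByParts}.

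The next step is to realize $T$ as integration against an $L^\infty$-density on $|D\chi_{B_r(x_0)}|$. The bound
\[
|T(f)|\le \|f\|_\infty\|\vec F\|_\infty\liminf_j\int_\Om|du_j|\, \dmu = \|f\|_\infty\|\vec F\|_\infty\, P(B_r(x_0),X)
\]
from Definition~\ref{def:regular}(1) extends $T$ to a signed Radon measure $\tau$ on $\Om$. Absolute continuity $\tau\ll|D\chi_{B_r(x_0)}|$ follows by the covering argument already used in Proposition~\ref{prop:TotalVarMeasure}: for a compact set $E$ with $|D\chi_{B_r(x_0)}|(E)=0$ and $\varepsilon>0$, choose an open $A_\varepsilon\supset E$ with $|D\chi_{B_r(x_0)}|(\partial A_\varepsilon)=0$ and $|D\chi_{B_r(x_0)}|(A_\varepsilon)<\varepsilon$; Remark~\ref{rmkOpt} then gives $\lim_j\int_{A_\varepsilon}|du_j|\, \dmu = |D\chi_{B_r(x_0)}|(A_\varepsilon)<\varepsilon$, hence $|\tau|(E)\le\|\vec F\|_\infty\varepsilon$. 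The Radon--Nikodym theorem now produces the function $(\vec F\cdot\nu)^-_{\partial B_r(x_0)}\in L^\infty(|D\chi_{B_r(x_0)}|)$ representing $\tau$, and \eqref{IntByParts} is proved for $f\in\Lip_c(\Om)$.

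To upgrade the identity to $f\in N^{1,2}(\Om,\mu)\cap L^\infty(\Om,\mu)$, I would multiply $f$ by a Lipschitz cutoff identically $1$ on a neighborhood of $\overline{B_r(x_0)}$ and compactly supported in $\Om$, reducing to compactly supported $f$, and then approximate by Lipschitz functions $f_k$ converging in $N^{1,2}(\Om,\mu)$-norm and truncated at $\|f\|_\infty$ to preserve the $L^\infty$-bound. Passing to the limit in each term of \eqref{IntByParts} uses Lemma~\ref{nulls} and Remark~\ref{rmk:LiptoN} (so that neither $\diver\vec F$ nor the perimeter measure charges capacity-null sets) together with the $L^\infty$-bound on $(\vec F\cdot\nu)^-_{\partial B_r(x_0)}$. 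The main obstacle I anticipate is the absolute-continuity step: \emph{a priori} $T$ is defined through the optimal sequence $(\varepsilon_j)$, and since the vector-valued density $\sigma_{x_0,r}$ in Definition~\ref{def:regular}(2) need not be unique, one must ensure that the extracted normal trace is actually intrinsic to $\vec F$ and $B_r(x_0)$. This is guaranteed by the identity $T(f)=\int_{B_r(x_0)}f\, d\diver\vec F+\int_{B_r(x_0)}\scal{\vec F}{df}\, \dmu$, whose right-hand side does not reference the chosen sequence, but this well-definedness must be carefully articulated.
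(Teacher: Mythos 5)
Your proposal follows essentially the same route as the paper's proof: the same optimal sequence $(u_j)_j$, the Leibniz-rule computation with dominated convergence to identify the limit of $\int_\Om f\scal{\vec F}{du_j}\,d\mu$ with the left-hand side of \eqref{IntByParts}, the uniform bound $|L_j(f)|\le C\|\vec F\|_\infty\|f\|_\infty$ producing a finite Radon measure, the covering argument for absolute continuity with respect to the perimeter measure, and the Radon--Nikodym theorem to extract the interior normal trace. You additionally sketch the extension from $\Lip_c(\Om)$ to $N^{1,2}(\Om,\mu)\cap L^{\infty}(\Om,\mu)$ and explicitly address the independence of the trace from the chosen sequence, both of which the paper defers (the latter to a subsequent remark), so your write-up is, if anything, slightly more complete.
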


\begin{proof}
  We use the optimal sequence $(u_j)_j$ defined in~\eqref{optimalseq}.
  Then, as in the proof of Proposition~\ref{dmiCont}, 
  by the definition of $\text{div}\vec{F}$
  (Definition~\ref{def:divF}) and the Lebesgue dominated convergence theorem
\begin{align*}
  \lim_{j\to\infty} \int_\Om f\scal{\vec F}{du_j}\,d\mu
  &= \lim_{j\to\infty}\left(\int_\Om\scal{\vec F}{d(u_jf)}\, d\mu-\int_\Om u_j\scal{\vec F}{df}\, d\mu \right) \\
  &= -\lim_{j\to\infty}\left(\int_\Om u_j f\, d\diver{\vec F}+\int_\Om u_j\scal{\vec F}{df}\, d\mu\right) \\
&= -\int_{B_r(x_0)}f\, d\diver{\vec F} -\int_{B_r(x_0)}\scal{\vec F}{df}\, d\mu.
\end{align*}
For the sequence $(L_j)_j$ of operators given by
\[
L_j(f):=\int_\Om f\scal{\vec F}{du_j}\,d\mu,
\]
we have that $|L_j(f)|\leq C \|\vec F\|_\infty \|f\|_\infty$, where
the positive constant $C$ is given by
\[
C= \sup_{j\in \N} \int_\Om |du_j|\,d\mu <\infty.
\]
Indeed, $C$ is finite since the ball $B_r(x_0)$ has finite
perimeter. In particular, $C$ is independent of both $f$ and
$\vec F$ and so, by the above argument, the operator
\[
L(f):=\lim_{j\to \infty} L_j(f)
\]
is bounded over $\Lip_c(\Om)$ and admits an extension to
$C_c(\Om)$. This in turn implies that there exists a measure
$\nu\in\mathscr{M}_b(\Om)$ such that for any $f\in C_c(\Om)$
\[
L(f)=\int_\Om f\,d\nu.
\]
The measure $\nu$ is concentrated on $\partial B_r(x_0)$; in fact take
any compact set $K$ such that $K\cap \partial B_r(x_0)=\emptyset$, an
open set $A\supset K$ such that ${\rm dist}(A,\partial B_r(x_0))>0$,
and take $\eps_j< {\rm dist}(A,\partial B_r(x_0))$. Then, since ${\rm
  spt}(du_j)\cap A=\emptyset$, we obtain for any $f\in \Lip_c(A)$,
\[
\int_\Om f\,d\nu =\lim_{j\to\infty} \int_\Om f\scal{\vec F}{du_j}\,d\mu =0,
\]
that is $|\nu|(K)=|\nu|(A)=0$. This property extends to any Borel set
$E$ such that $E\cap \partial B_r(x_0)=\emptyset$ since
\[
|\nu|(E)=\sup_{K\subset E}|\nu|(K)=0.
\]
Also $\nu$ can be seen to be absolutely continuous with respect to
$|D_c\chi_{B_r(x_0)}|$; indeed, if $E$ is a Borel set such that
$|D_c\chi_{B_r(x_0)}|(E)=0$, then there exists an open set $A_\eps$
such that $|D_c\chi_{B_r(x_0)}|(A_\eps)<\eps$. Fix a compact set
$K\subset E$ and an open set $A\supset K$ such that $\bar A\subset
A_\eps$. Then, for any $f\in \Lip_c(A)$ with $\|f\|_\infty\leq 1$ we
have that
\[
\left| \int_A f\scal{\vec F}{du_j}\,d\mu \right|
\leq \limsup_{j\to \infty} \| \vec F\|_\infty \int_A |du_j|\,d\mu
\leq \| \vec F\|_\infty |D_c\chi_{B_r(x_0)}|(\bar A)
<\eps \| \vec F\|_\infty,
\]
that is $|\nu|(A)<\eps$. Therefore, since $\eps$ is arbitrary,
$|\nu|(K)=0$. Finally, by taking the supremum over $K\subset E$, we
obtain that $|\nu|(E)=0$, and hence $\nu$ is absolutely continuous
with respect to $|D_c\chi_{B_r(x_0)}|$.

To conclude, there exists $(\vec F\cdot\nu)^-_{\partial B_r(x_0)}\in
L^1(|D\chi_{B_r(x_0)}|)$ such that
\[
L(f)=-\int_\Om f (\vec F\cdot \nu)^-_{\partial B_r(x_0)}\, d|D\chi_{B_r(x_0)}|.
\]
This map defines, in the metric setting, the interior normal trace of
$\vec F$ on $\partial B_r(x_0)$, and the integration by parts formula
\eqref{IntByParts} holds.
\end{proof}

\begin{remark}
  The term interior normal trace can be justified by the following
  facts. If $\vec F\in {\cal DM}^\infty(\Om)\cap C(\Om,\R^k)$, then by
  Proposition~\ref{dmiCont} we get that
\[
(\vec F\cdot \nu)^-_{\partial B_r(x_0)}=-\scal{\vec F}{\sigma_{x_0,r}}.
\]
In addition, also when $\vec F$ is not continuous, recalling that with $u_{x_0}(x)=d(x,x_0)$,
\[
du_j(x)=-\frac{1}{\eps_j} du_{x_0}(x) \chi_{B_r(x_0)\setminus B_{r-\eps_j}(x_0)}(x), 
\]
and by using the coarea formula \eqref{coarea}, we can write
\begin{align*}
  \int_\Om f\, \scal{\vec F}{du_j}\,d\mu=&
  -\frac{1}{\eps_j}\int_{B_r(x_0)\setminus B_{r-\eps_j}(x_0)} f\, \scal{\vec F}{du_{x_0}}\,d\mu\\
  =& -\frac{1}{\eps_j}\int_{r-\eps_j}^r \int_\Om f\, \scal{\vec F}{du_{x_0}}\, d|D\chi_{B_t(x_0)}|dt.
\end{align*}
Therefore, we have obtained that 
\[
\int_\Om f\, (\vec F\cdot \nu)^-_{\partial B_r(x_0)}\,
d|D\chi_{B_r(x_0)}|= -\lim_{j\to\infty}
\frac{1}{\eps_j}\int_{r-\eps_j}^r\int_\Om f\, \scal{\vec
  F}{du_{x_0}}\, d|D\chi_{B_t(x_0)}|dt,
\]  
which gives meaning to the following equality in terms of the trace
\[
\int_\Om f (\vec F\cdot \nu)^-_{\partial B_r(x_0)}\,d|D\chi_{B_r(x_0)}| = 
-\int_\Om f\scal{\vec F}{du_{x_0}}\, d|D\chi_{B_r(x_0)}|,
\]
and to the fact that the vector $du_{x_0}$ defines in a weak sense the
normal vector $\sigma_{x_0,r}$ to $\partial B_r(x_0)$.
\end{remark}

\begin{remark}
  Observe that in the proof of
  Proposition~\ref{Question1} we have used a particular optimal
  sequence. It turns out, nevertheless, that the interior
  normal trace $(\vec F\cdot \nu)^-_{\partial B_r(x_0)}$ does not
  depend on this particular choice. This fact is a direct consequence
  of equation \eqref{IntByParts}, since then formula
\[
\int_\Om f (\vec F\cdot \nu)^-_{\partial B_r(x_0)}\,
d|D\chi_{B_r(x_0)}| = \int_{B_r(x_0)} f\, d\diver \vec F + \int_{B_r(x_0)}
\scal{\vec F}{df}\, d\mu
\]
uniquely identifies the values of $(\vec F\cdot \nu)^-_{\partial B_r(x_0)}$.
\end{remark}

\begin{remark} \label{rmk:sHausdorff}
  By~\cite[Theorem 5.3]{A} (see also~\cite{AMP}),
  formula~\eqref{IntByParts} can also be written by
\[
\int_{B_r(x_0)} f\, d\diver \vec F + \int_{B_r(x_0)} \scal{\vec F}{df}
\,d\mu=\int_{\partial^*B_r(x_0)} f (\vec F\cdot \nu)^-_{\partial
  B_r(x_0)} \teta_{x_0,r}\,d{\mathcal S}^h,
\]
where $\partial^*B_r(x_0)$ is the essential boundary of $B_r(x_0)$,
${\mathcal S}^h$ is the spherical Hausdorff measure defined using the
Carath\'eodory construction based on the gauge function
\[
h(\overline B_\vrho)=\frac{\mu(\overline B_\vrho)}{\vrho},
\]
and $\teta_{x_0,r}: X\to [c,c_d]$ is a Borel function
depending, in general, on the ball $B_r(x_0)$, and $c$ is a positive constant
and $c_d$ the doubling constant of $\mu$.
\end{remark}

\section{Harmonicity and the mean value property}
\label{sect:SingFunct}

In this section, we shall follow the approach of \cite{HS} and
construct, for any regular ball $B_r(x_0)\subset X$ and any $\bar x\in
B_r(x_0)$ the Green function on $B_r(x_0)$ with singularity at $\bar
x$, that is an extended real-valued function $G(x)=G_{B_r(x_0)}^{\bar
  x}(x)$ such that
\begin{enumerate}
\item $G$ is strictly positive and harmonic in $B_r(x_0)\setminus
 \{\bar x\}$;
\item $G\in N^{1,2}(X\setminus B_\eps(\bar x))$ for any $\eps>0$ and 
$G|_{X\setminus \overline{B}_r(x_0)}=0$;
\item for every $y\in \partial B_r(x_0)$ 
\[
\lim_{x\to y}G(x)=0;
\]
\item $G$ is singular at $\bar x$; that is
\[
\lim_{x\to \bar x} G(x)=\infty;
\]
\item for all $0<a\leq b$,
\[
\Capa_2(\{x\in B_r(x_0):\ G(x)\geq b\},\{x\in B_r(x_0):\ G(x)>a\}) =\frac{1}{b-a}.
\]
\end{enumerate}

In \cite{HS} the authors constructed the Green function of a
relatively compact domain with the aforementioned properties in metric
measure spaces; we refer also to \cite{H} and \cite{DGM}. We can state
the existence and main properties of the Green function in the
following theorem. We assume that $X$ supports a $(1,2)$-Poincar\'e
inequality.

\begin{theorem}\label{thmGreen} 
  Let $\Omega\subset X$ be a relatively compact domain. Then there
  exists the Green function $G=G_\Omega^{\bar x}$ with singularity at
  $\bar x\in \Omega$. In addition, $dG\in L^2 (X\setminus B_\eps(\bar
  x))$ for any $\eps>0$ and 
\[
\mathscr{D}_{X\setminus B_\eps(\bar x)} G=-\nu^G_\Omega,
\]
where $\nu_\Om^G$ is a positive Radon measure in the dual
$N^{1,2}_0(X\setminus B_\eps(\bar x))^*$ concentrated on $\partial
\Om$. Moreover, $G$ admits the measure-valued Laplace operator
\[
\mathscr{D}_XG=\delta_{\bar x}-\nu^G_\Om,
\]
in the sense that for any $v\in N^{1,2}(X)$ continuous at $\bar x$,
then
\[
\int_X \scal{dG}{dv}\,d\mu=\int_{\partial \Om} v\,d\nu^G_\Om-v(\bar x).
\]
\end{theorem}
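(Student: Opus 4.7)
The plan is to cite \cite{HS} for the existence of $G$ with properties~(1)--(5) and to verify the three additional measure-theoretic claims. For the $L^2$-regularity of $dG$, property~(2) gives $G\in N^{1,2}(X\setminus B_\eps(\bar x))$, so its minimal $2$-weak upper gradient $g_G$ lies in $L^2$ there; via the Cheeger differentiable structure, $|dG|$ is $\mu$-a.e.\ comparable to $g_G$, so $dG\in L^2(X\setminus B_\eps(\bar x),\R^k,\mu)$.

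For $\mathscr{D}_{X\setminus B_\eps(\bar x)}G=-\nu^G_\Om$, I would consider the bounded linear functional $L_\eps(v)=-\int_X\scal{dG}{dv}\,d\mu$ on $N^{1,2}_0(X\setminus B_\eps(\bar x))$. Harmonicity of $G$ on $\Om\setminus\{\bar x\}$ yields $L_\eps(v)=0$ for $v\in\Lip_c(\Om\setminus\overline{B_\eps(\bar x)})$, and vanishing of $dG$ on $X\setminus\overline\Om$ yields $L_\eps(v)=0$ for $v\in\Lip_c(X\setminus\overline\Om)$, so the action of $L_\eps$ is concentrated near $\partial\Om$. To upgrade $L_\eps$ to a positive Radon measure I would verify positivity of $L_\eps$ on non-negative test functions through a comparison argument exploiting the Dirichlet-energy-minimizing character of $G$ together with $G\ge 0$ and $G\equiv 0$ on $X\setminus\overline\Om$, and then invoke the Riesz--Markov theorem after extending from $\Lip_c$ to $C_c$ by density. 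Boundedness of $L_\eps$ in the Sobolev dual is immediate from Cauchy--Schwarz.

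For the global identity, I would use the capacitary extremals from property~(5) as sharp cutoffs near $\bar x$: set $\eta_\eps=\min\!\left((G-a_\eps)_+/(b_\eps-a_\eps),1\right)$ with $b_\eps-a_\eps=1$ and $a_\eps\to\infty$. Then $\eta_\eps=1$ on $\{G\ge b_\eps\}$ (a shrinking neighborhood of $\bar x$ by property~(4)), $\eta_\eps=0$ on $\{G\le a_\eps\}$, and $d\eta_\eps=dG/(b_\eps-a_\eps)$ on the transition set. Property~(5) gives the key normalization
\[
\int_X\scal{dG}{d\eta_\eps}\,d\mu=\int_{\{a_\eps<G<b_\eps\}}\frac{|dG|^2}{b_\eps-a_\eps}\,d\mu=1.
\]
For $v\in N^{1,2}(X)\cap L^\infty(X)$ continuous at $\bar x$, the test function $v(1-\eta_\eps)$ lies in $N^{1,2}_0(X\setminus B_{\delta_\eps}(\bar x))$ and the previous step combined with the Leibniz rule gives
\[
\int(1-\eta_\eps)\scal{dG}{dv}\,d\mu-\int v\scal{dG}{d\eta_\eps}\,d\mu=\int v(1-\eta_\eps)\,d\nu^G_\Om.
\]
Dominated convergence handles the first term on the left and the right-hand side as $\eps\to 0$; the concentration of $d\eta_\eps$ near $\bar x$ combined with continuity of $v$ at $\bar x$ gives $\int v\scal{dG}{d\eta_\eps}\,d\mu\to v(\bar x)$, yielding the claimed identity. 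A standard truncation extends to unbounded $v\in N^{1,2}(X)$ continuous at $\bar x$.

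The hardest step is the localization $\int v\scal{dG}{d\eta_\eps}\,d\mu\to v(\bar x)$: while property~(5) guarantees the correct total mass, making this limit rigorous requires that the transition set $\{a_\eps<G<b_\eps\}$ shrink spatially to $\{\bar x\}$, which follows from property~(4) together with interior boundedness of $G$ on compact subsets of $\Om\setminus\{\bar x\}$, but care is needed since the measures $\scal{dG}{d\eta_\eps}\,d\mu$ are not a priori normalized as probability measures. A secondary difficulty is positivity of $\nu^G_\Om$ in the absence of a classical normal derivative, which in this setting is handled through the energy-minimization characterization of $G$.
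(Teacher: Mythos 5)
Your overall architecture is sound and, at bottom, the same as the paper's: existence and properties (1)--(5) are imported from \cite{HS}; the boundary measure $\nu^G_\Om$ comes from representing the functional $v\mapsto\cE(G,v)$ on $X\setminus B_\eps(\bar x)$ and localizing its support to $\partial\Om$ exactly as you describe; and the point mass at $\bar x$ comes from the capacity normalization in property (5). Where you genuinely diverge is in producing $\delta_{\bar x}$: the paper works with the truncations $T_iG=\min\{G,M_i\}$, which are superharmonic in $\Om$, takes their Riesz measures $\nu^G_i$ (supported in shrinking balls $\overline{B}_{r_i}(\bar x)$), shows via a Serrin-type argument that they all have the same total mass $-\lambda$, passes to the weak limit $\lambda\delta_{\bar x}$, and only then identifies $\lambda=-1$ from $\Capa_2(\{G\ge 1\},\Om)=1$. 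Your capacitary cutoffs $\eta_\eps$ are essentially differences of consecutive truncations, and pairing directly with $v(1-\eta_\eps)$ collapses the Serrin lemma, the equiboundedness of the $\nu^G_i$, and the weak-$*$ compactness step into the single identity $\int_{\{a_\eps<G<b_\eps\}}|dG|^2/(b_\eps-a_\eps)\,d\mu=1$, which makes $\scal{dG}{d\eta_\eps}\,d\mu$ an honest probability measure concentrating at $\bar x$ (so the worry you raise at the end resolves itself). This is a tidier route, at the price of using the capacitary-extremal identity for every level pair rather than once; note that property (5) only gives the \emph{value} of the capacity, and you still need the small argument (which the paper spells out for $E=\{G\ge1\}$, using harmonicity of $G$ in the condenser) that the normalized truncation of $G$ actually attains it.

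Two soft spots. First, positivity of $\nu^G_\Om$: you gesture at ``a comparison argument exploiting the Dirichlet-energy-minimizing character of $G$,'' but the substance here is that the zero-extension of $G$ is superharmonic across $\partial\Om$, i.e.\ $\cE(G,v)\ge 0$ for $0\le v\in\Lip_c(X\setminus \overline{B_\eps(\bar x)})$ (as written your inequality points the wrong way: $\mathscr{D}G=-\nu^G_\Om$ with $\nu^G_\Om\ge0$ means $L_\eps\le0$ on nonnegative tests). This is not automatic from energy minimization alone --- harmonicity in $\Om\setminus\overline{B_\eps(\bar x)}$ gives equality, not an inequality, and the sign comes from the behaviour \emph{across} $\partial\Om$ (e.g.\ $G$ as a decreasing limit of capacitary potentials, or the variational inequality of the obstacle problem); the paper outsources exactly this to the representation theorem for superharmonic functions in \cite{BMS}. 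Second, ``dominated convergence handles the first term'' is not available as stated: property (5) forces $\int_X|dG|^2\,d\mu=\sum_k\int_{\{k<G<k+1\}}|dG|^2\,d\mu=\infty$, so $\scal{dG}{dv}$ need not be in $L^1(X)$ and $\int_X\scal{dG}{dv}\,d\mu$ is only conditionally defined. The limit $\lim_\eps\int(1-\eta_\eps)\scal{dG}{dv}\,d\mu$ should be taken as the \emph{definition} of the left-hand side of the theorem's identity (the paper does the same, defining it as $\lim_i\int\scal{dv}{dT_iG}\,d\mu$); once you say so explicitly, your argument closes.
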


\begin{proof}
We refer to \cite{HS} for the details on the construction of $G$. We sketch the main
steps needed in the definition. We find a harmonic function
on $\Om\setminus \overline B_{\eps_j}(\bar x)$ 
\[
G_j =\frac{v_j}{\Capa_2(\overline{B}_{\eps_j}(\bar x),\Om)},
\]
where $B_{\eps_j}(\bar x)$ is a regular ball, $\eps_j\searrow 0$,
$\eps_j<{\rm dist}(\bar x,\partial \Om)$, and $v_j$ is the potential
of $\overline{B}_{\eps_j}(\bar x)$ with respect to $\Om$; that is
$v_j\in N^{1,2}(X)$ is harmonic in
$\Om\setminus\overline{B}_{\eps_j}(\bar x)$, $v_j=0$ on $X\setminus
\Om$ and $v_j=1$ on $\overline B_{\eps_j}(\bar x)$. It is then shown
that, up to subsequences, the functions $(G_j)_j$ converge locally
uniformly in $X\setminus \{\bar x\}$ to a function $G$. The limit
function $G$ has the desired properties of a Green function.

Let us fix a positive sequence $(M_i)_{i\geq 0}$ such that
$M_i\nearrow\infty$, and the truncations
\[
T_iG:=\min\{G,M_i\}.
\]
There exists a sequence $r_i\searrow 0$ of radii such that
\[
E_i \subset B_{r_i}(\bar x),
\]
where we have written $E_i=\{x\in \Om: G(x)>M_i\}$; and we may consider the case in which $r_i<\eps$. Then $T_iG=G$ on $X\setminus B_\eps(\bar x)$ and $T_iG$ is subharmonic
in $X\setminus B_\eps(\bar x)$. By \cite{BMS} (we refer also to
\cite{M} for a detailed description in the Euclidean case) there
exists a positive Radon measure $\nu^G_\Om$ in the dual $N^{1,2}_0(X\setminus
B_\eps(\bar x))^*$ such that for all $v\in \Lip_c(X\setminus
B_\eps(\bar x))$ we have
\[
\int_{X\setminus B_\eps(\bar x)} \scal{dG}{dv}\,d\mu=\int_{X\setminus
  B_\eps(\bar x)} v\,d\nu^G_\Om.
\]
If $v\in {\rm Lip}_c(X\setminus \overline\Omega)$, 
the fact that $G=0$ on $X\setminus\overline \Omega$ implies $dG=0$ on $X\setminus \overline \Omega$, and then
\[
\int_{X\setminus B_\eps (\bar x)} v\,d\nu^G_\Omega =
\int_{X\setminus \overline \Omega} v\,d\nu^G_\Omega=\int_{X\setminus \overline\Omega} 
\scal{dG}{dv}\,d\mu =0.
\]
On the other hand, the harmonicity of $G$ in $\Omega\setminus B_\eps(\bar x)$ implies
that if $v\in {\rm Lip}_c(\Omega\setminus B_\eps(\bar x))$, then
\[
\int_{X\setminus B_\eps (\bar x)} v\,d\nu^G_\Omega =
\int_{\Omega\setminus B_\eps (\bar x)} v\,d\nu^G_\Omega =
\int_{\Omega\setminus B_\eps(\bar x)} 
\scal{dG}{dv}\,d\mu =0.
\]
Hence the measure $\nu^G_\Omega$ is concentrated on $\partial \Om$.

Analogously, since $T_iG$ is superharmonic in $\Om$ there exists a
positive Radon measure $\nu^G_i\in N^{1,2}_0(\Om)^*$ such that for all $v\in
\Lip_c(\Om)$
\[
\int_\Om \scal{dT_iG}{dv}\,d\mu=-\int_{\Om\setminus B_{r_i}(\bar x)} v\,d\nu^G_i.
\]
The measures $\nu_i$ are supported in $\overline{B}_{r_i}(\bar x)$; indeed, since $T_iG=G$ on $\Om\setminus \overline{B}_{r_i}(\bar x)$ it is harmonic. Hence, if $v\in {\rm Lip}_c(\Om\setminus \overline{B}_{r_i}(\bar x))$, 
\[
\int_\Om v\,d\nu^G_i 
=\int_{\Om \setminus \overline{B}_{r_i}(\bar x)} v\,d\nu^G_i =\int_{\Om\setminus \overline{B}_{r_i}(\bar x)}\scal{dG}{dv}\,d\mu=0. 
\]
Following the argument of Serrin~\cite[Lemma 1 and Theorem 3]{Se}, 
there exists $\lambda\in \R$ such that if $v\in {\rm Lip}_c(\Om)$ is equal to $1$ 
in a neighborhood of $\bar x$, then
\[
\int_\Om \scal{dG}{dv}\,d\mu=\lambda.
\]
Indeed, if $v_1,v_2\in {\rm Lip}_c(\Om)$ are two functions that are equal to $1$ in a neighborhood
of $\bar x$, the difference $v=v_1-v_2$ belongs to ${\rm Lip}_c(\Om\setminus\{\bar x\})$; hence, the harmonicity
of $G$ in $\Om\setminus\{\bar x\}$ implies that
\[
\int_\Om \scal{dG}{dv_1}\,d\mu -\int_\Om \scal{dG}{dv_2}\,d\mu=\int_\Om \scal{dG}{dv}\,d\mu=0.
\]
In particular, if $v\in {\rm Lip}_c(\Om)$ is a function such that $v\equiv 1$
on $\overline{B}_{r_1}(\bar x)$, then 
\[
\nu^G_i(\overline{B}_{r_i}(\bar x))=\int_\Om v\,d\nu^G_i 
=-\int_\Om \scal{dT_iG}{dv}\,d\mu
=-\int_\Om \scal{dG}{dv}\,d\mu=-\lambda.
\]
This argument implies that $\lambda\in\R$ is negative and the measures $\nu^G_i$ are equibounded in $\mathscr{M}_b(\Omega)$.
Thus, up to subsequences, $\nu_i^G$ converges weakly to $\lambda\delta_{\bar x}$.
To summarize, we have proved that the sequence of the measure-valued Laplace
operators
\[
\mathscr{D} T_iG=\nu_i^G-\nu^G_\Om
\]
admits a convergent subsequence $\mathscr{D}T_{i_k} G$,
defining the measure-valued Laplace operator
\[
\mathscr{D}_X G=\lim_{k\to\infty} \mathscr{D}T_{i_k} G= \lambda
\delta_{\bar x}-\nu^G_\Om.
\] 
The fact that the limit measure is uniquely determined implies that
for any sequence $M_i\nearrow\infty$, the measures $\mathscr{D}T_iG$
converge and the limit measure is $\lambda \delta_{\bar x}-\nu^G_\Omega$.

Let us show that $\lambda=-1$. 
Let us consider the set $E=\{x\in \Om:\
G(x)\geq 1\}$ and a function $v\in \Lip_c(\Om)$ such that $v=1$ on
$E$. Since $\bar x$ is an interior point of $E$, we have
\[
\lambda =\lambda v(\bar x)=-\int_{\Om\setminus E} \scal{dv}{dG}\,d\mu.
\]
On the other hand, the map $f=(G-v)\chi_{\Om\setminus E}$ belongs to
$N^{1,2}_0(\Om\setminus E)$ and then
\[
0= \int_\Om \scal{df}{dG}\,d\mu = \int_{\Om\setminus E} |dG|^2\,d\mu
-\int_{\Om\setminus E} \scal{dv}{dG}\,d\mu.
\]
These properties of $G$ imply that $G$ is the potential of $E$ with
respect to $\Om$, that is
\[
\int_{\Om\setminus E} |dG|^2\,d\mu ={\rm Cap}_2(E,\Om)=1.
\]
We may hence conclude that $\lambda=-1$.

Finally, we point out that the identity
\[
\int_X \scal{dv}{dG}\,d\mu=\int_{\partial \Omega} v\,d\nu^G_\Om - v(\bar x)
\]
is valid for functions $v\in N^{1,2}(X)$ that are constant in a neighborhood
of $\bar x$, but it can be generalized to functions $v\in N^{1,2}(X)$ that
are continuous at $\bar x$. This is a simple consequence of the
limit
\begin{align*}
\int_{\partial \Om} v\,d\nu^G_\Om -v(\bar x)=&
\lim_{i\to \infty} \int_{\partial \Om} v\,d\nu^G_\Om -\int_{B_{r_i}(\bar x)}v\,d\nu^G_i\\
=&
\lim_{i\to \infty} \int_X \scal{dv}{dT_i G}\,d\mu =\int_X \scal{dv}{dG}\,d\mu.
\end{align*}
\end{proof}

\begin{remark}
Let us consider the (first) Heisenberg group $\mathbb{H}$ with the geodesic 
distance. In this case, the natural differential structure is given by the horizontal bundle and the Laplace operator is just the horizontal Laplace operator. In this setting,
we can use all the results of the preceding section and obtain the representation
of the measure $\nu^G$ in terms of the perimeter measure. Notice that 
a ball $B_r(x_0)$ in $\mathbb{H}$ satisfies a ball condition as in \cite[Definition 2.1]{AKSZ} at its boundary except at two points; a finite collection of points is
negligible. Hence, if $G=G^{\bar x}_{B_r(x_0)}$ is the Green function on $B_r(x_0)$ with singularity at $\bar x$, then whenever $x$ is a boundary point of  $B_r(x_0)$ 
satisfying the ball condition,
\begin{equation} \label{eq:twosidedballcond}
\Psi(G,x,\varrho):= \sup_{B_{2\varrho}(x)} G -\sup_{B_\varrho(x)}G
\leq C\varrho,
\end{equation}
where $0<\rho\leq d(x,\bar x)/2$ and $C$ is a positive constant that does not depend on $x$, $\bar x$, or $\rho$. It follows from a covering argument together with \eqref{eq:twosidedballcond} and \cite[Lemma 4.8]{BMS} that $\nu^G_{B_r(x_0)}$ is absolutely continuous with respect to the perimeter measure $|D\chi_{B_r(x_0)}|$. Moreover,               
there exists a function $\vartheta_G\in L^1(X,|D\chi_B|)$ such that
 $d\nu_B^G=\vartheta_G d|D\chi_B|$. The function $\vartheta_G$ comes
 from the Radon--Nikodym theorem. 
\end{remark}

We give a characterization of harmonic functions via a mean value type
property with respect to boundary measures.

\begin{theorem}\label{meanValThm}
Let $u\in N^{1,2}(\Om,\mu)$, then the following hold:
\begin{enumerate}
\item[(1)] Let $u$ be harmonic in $\Om$. Then for every regular ball
  $B_r(x_0)\subset\Om$ and $\bar x\in B_r(x_0)$
\begin{equation}\label{meanValue}
u(\bar x)=\int_{\partial B_r(x_0)} u\, d\nu^G_{B_r(x_0)};
\end{equation}
\item[(2)] If for every regular ball $B_r(x_0)\subset \Om$ and any
  $\bar x\in B_r(x_0)$, $u$ satisfies the mean value property
  \eqref{meanValue}, then $u$ is harmonic in $\Om$.
\end{enumerate}
An analogous characterization holds true for sub- and superharmonic
functions. Let $u\in N^{1,2}(\Om,\mu)$ then the following are
equivalent:
\begin{enumerate}
\item[(3)] Let $u$ be subharmonic (superharmonic) in $\Om$. Then for
  every regular ball $B_r(x_0)\subset \Om$ and $\bar x\in B$
\[
u(\bar x)\leq \int_{\partial B_r(x_0)} u\, d\nu^G_{B_r(x_0)}, \qquad 
\left(u(\bar x)\geq \int_{\partial B_r(x_0)} u\, d\nu^G_{B_r(x_0)}\right);
\]
\item[(4)] If for any regular ball $B_r(x_0)$ and any $\bar x\in
  B_r(x_0)$ 
  \[
  u(\bar x)\leq \int_{\partial B_r(x_0)} u\, d\nu^G_{B_r(x_0)}, \qquad \left(u(\bar
    x)\geq \int_{\partial B_r(x_0)} u\, d\nu^G_{B_r(x_0)}\right),
\]
then $u$ is subharmonic (superharmonic).
\end{enumerate}
\end{theorem}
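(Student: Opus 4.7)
The plan is to derive all four assertions from the Green-function identity in Theorem~\ref{thmGreen}, combined with the fact that the boundary measure $\nu^G_{B_r(x_0)}$ does not charge sets of zero $2$-capacity.

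For direction (1), I fix a regular ball $B=B_r(x_0)\subset\Om$ and $\bar x\in B$, set $G=G^{\bar x}_B$, and choose a Lipschitz cutoff $\eta\in\Lip_c(\Om)$ with $\eta\equiv1$ on a neighborhood of $\overline B$. Then $\eta u\in N^{1,2}(X,\mu)$, and since harmonic functions admit a continuous representative, $\eta u$ is continuous at $\bar x$. The identity in Theorem~\ref{thmGreen} applied to $v=\eta u$ gives
\[
\int_X \scal{dG}{d(\eta u)}\,d\mu \,=\, \int_{\partial B} u\,d\nu^G_B - u(\bar x).
\]
Since $dG$ is supported in $\overline B$ while $d\eta\equiv0$ on $\overline B$, the left-hand side reduces to $\int_B \scal{dG}{du}\,d\mu$. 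Testing the harmonicity of $u$ against the truncations $T_iG\in N^{1,2}_0(B)\subset N^{1,2}_0(\Om)$ and letting $i\to\infty$, exactly as in the limiting argument used in the construction of $\mathscr{D}_X G$, shows that this integral vanishes, which yields~\eqref{meanValue}.

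For direction (2), I argue locally. Given $x\in\Om$, pick $r>0$ so that $\overline{B_r(x)}\subset\Om$ and $B=B_r(x)$ is regular (almost every radius works). Using the Dirichlet solver of Section~\ref{sect:InhomogDP} I produce a harmonic $h$ on $B$ with $h-u\in N^{1,2}_0(B,\mu)$. Part~(1) applied to $h$ gives $h(\bar x)=\int_{\partial B}h\,d\nu^G_B$, while the hypothesis gives $u(\bar x)=\int_{\partial B}u\,d\nu^G_B$. Since $u-h\in N^{1,2}_0(B,\mu)$ vanishes $2$-quasieverywhere on $X\setminus B$, and in particular on $\partial B$, and $\nu^G_B$ ignores capacity-zero sets, the two boundary integrals coincide. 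Thus $u\equiv h$ on $B$ so $u$ is harmonic on $B$; since such balls cover $\Om$, $u$ is harmonic on $\Om$. Assertions (3) and (4) follow from the same template with inequalities: for (3), the maximum principle gives $u\le h$ in $B$, and combining part~(1) for $h$ with the q.e.-equality on $\partial B$ yields $u(\bar x)\le \int u\,d\nu^G_B$, with the superharmonic case symmetric; for (4), the same harmonic extension $h$ satisfies $(u-h)(\bar x)\le \int_{\partial B}(u-h)\,d\nu^G_B=0$ for every $\bar x\in B$, which is the comparison form of subharmonicity.

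The delicate step is verifying that $\nu^G_B$ does not charge sets of zero $2$-capacity. Because $G\notin N^{1,2}(X,\mu)$, Lemma~\ref{nulls} cannot be invoked directly; however, Theorem~\ref{thmGreen} states that $\mathscr{D}_{X\setminus B_\eps(\bar x)}G=-\nu^G_B$ with $G|_{X\setminus B_\eps(\bar x)}\in N^{1,2}$, so Lemma~\ref{nulls} does apply away from $\bar x$, and exhausting $\eps\downarrow0$ covers all of $\partial B$. With this in hand the identification of boundary integrals across functions differing by a member of $N^{1,2}_0(B,\mu)$ and the truncation limit in the proof of (1) are routine.
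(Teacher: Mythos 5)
Your proof is correct and follows the paper's overall strategy: part (1) is obtained by pairing the harmonicity of $u$ with the identity $\mathscr{D}_X G=\delta_{\bar x}-\nu^G_{B}$ from Theorem~\ref{thmGreen} (the paper simply writes $0=\int_X \scal{du}{dG}\,d\mu=-u(\bar x)+\int_{\partial B}u\,d\nu^G_{B}$, leaving the cutoff and the truncation limit $T_iG\to G$ implicit, which you spell out), and parts (2)--(4) are obtained by comparing $u$ with its harmonic replacement on each regular ball. The one genuine divergence is how the boundary integrals of $u$ and of the replacement $h=H_u$ are identified. The paper first assumes $u$ continuous, uses that $H_u$ attains the boundary values of $u$ continuously so that $\int_{\partial B}H_u\,d\nu^G_{B}=\int_{\partial B}u\,d\nu^G_{B}$, and then treats general $u\in N^{1,2}(\Om,\mu)$ via the quasicontinuous approximation argument of \cite[Section~6]{BBS}. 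You instead observe that $h-u\in N^{1,2}_0(B,\mu)$ vanishes $2$-q.e.\ on $\partial B$ and that $\nu^G_{B}$ charges no set of zero $2$-capacity, which you justify correctly by applying Lemma~\ref{nulls} to $G$ on the complement of a small ball about $\bar x$, where $G$ is Newtonian (a single sufficiently small $\eps$ suffices, since $\nu^G_{B}$ is concentrated on $\partial B$, so no exhaustion in $\eps$ is actually needed). This variant buys a shorter argument that requires neither the boundary continuity of the harmonic replacement nor the approximation step, and it makes explicit that the representative of $u$ entering $\int_{\partial B}u\,d\nu^G_{B}$ is the quasicontinuous one --- a point the theorem's statement leaves implicit in either approach. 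Your treatment of (3) and (4) via the comparison principle matches what the paper compresses into ``the same line of reasoning.''
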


\begin{proof}
  Suppose that $u$ is harmonic. Then $u\in N^{1,2}(\Om,\mu)\cap
  L^{\infty}_{\rm loc}(\Om)$ and we can apply
  Theorem~\ref{thmGreen}. We obtain for any regular ball $B_r(x_0)$
  and $\bar x\in B_r(x_0)$
\[
0=\int_X \scal{du}{dG^{\bar x}_B}\,d\mu =-\int_X u\,d\mathscr{D}_X G^{\bar
  x}_B =-u(\bar x)+\int_{\partial B_r(x_0)} u\, d\nu^G_B,
\]
which gives the condition (1).

On the other hand, if $u$ is continuous, if we fix a regular ball $B=B_r(x_0)$,
we can consider the harmonic function $H_u$ generated by $u$ on $B$, that is the
solution of the problem
\[
\min\left\{
\int_B |dv|^2 \,d\mu: v-u\in N^{1,2}_0(B,\mu)\right\}.
\]
Hence $H_u$ is harmonic in $B$ and satisfies the mean value property, that is
for any $\bar x\in B$
\begin{equation}\label{huMean}
H_u(\bar x)=\int_{\partial B_r(x_0)} H_u d\nu^G_B.
\end{equation}
The conclusion follows from continuity of $u$ since
\[
\lim_{B\ni x\to y\in \partial B}H_u(x)=u(y),
\]
and then by \eqref{huMean}, $H_u=u$ on $B$. For a general $u\in
N^{1,2}(\Om,\mu)$, we can find a continuous function $u_\eps$ such
that $u=u_\eps$ outside a set of capacity less than $\eps$ and such
that $\|u-u_\eps\|_{1,2}<\eps$; then by an approximation
argument in \cite[Section~6]{BBS}, we can conclude the assertion.

The same line of reasoning carries out in the case of sub- and
superharmonic functions.
\end{proof}

\begin{remark}
  It was proved in \cite{BBS} that the harmonic extension of a
  function $u\in N^{1,2}(\Om,\mu)$ on a ball $B\subset \Om$ can be
  expressed in terms of harmonic measures $\nu_{\bar x}$ with
  singularity at $\bar x\in B$; by this we mean
  that if $\varphi\in C(\partial B)$, then in \cite[Theorem 5.1]{BBS}, 
  its harmonic extension is given by
  \[
  H_\varphi(\bar x)=\int_{\partial B}\varphi\, d\nu_{\bar x}.
  \] 
  If we move $\bar x\in B_r(x_0)$, it is possible to see that the measures
  $\nu_{\bar x}$ are mutually equivalent; in particular, if we take $x_0$ and $\bar x\in B_r(x_0)\setminus\{x_0\}$, we have that $\nu_{\bar x}$ is absolutely continuous
  with respect to $\nu_{x_0}$ and its density $P(\bar x,\cdot)$ is called
  the Poisson kernel. In other terms, the Poisson kernel is defined as
  \[
  P(\bar x,x)=\frac{d\nu_{\bar x}}{d\nu_{x_0}}(x).
  \]
  In \cite{BBS}, $\nu_{\bar x}$ was not explicitly identified. 
 Nevertheless, from the results contained in the previous sections, we are able to
 identify this measure as the outward normal derivative 
 $\nu_B^G$ of the Green function.
\end{remark}

\begin{example}
{\rm 
In Example~\ref{example1}, if we take $\Omega=B_1(0)$, the unit ball, 
then all balls except $B_1(0)$ are regular. This is
due to the fact that the perimeter of $B_1(0)$ has weight $1$, that
is $|D\chi_B|={\cal H}^{n-1}\res \partial B$. However, if we
consider the optimal sequence $(u_j)_j$ defined in \eqref{optimalseq} we have
that
\[
\int_\Rn |\nabla u_j|\, d\mu \to 2 {\cal H}^{n-1}(\partial
B_1(0))=2|D\chi_{B_1(0)}|(\Rn).
\]
Nevertheless, the measure $\nu^G_{B_1(0)}$ can still be characterized
as a perimeter measure, but with
\[
d\nu^G_{B_1(0)}= 2(\nabla G \cdot \nu_{B_1(0)}) d{\cal H}^{n-1}\res \partial B_1(0)
=2 (\nabla G \cdot \nu_{B_1(0)}) d|D\chi_{B_1(0)}|.
\]
On the other hand, if we take any other ball $B\subset B_1(0)$, it is
regular and in this case ${\cal H}^{n-1}(\partial B\cap \partial
B_1(0))=0$. Note also that if ${\cal H}^{n-1}(B\cap \partial
B_1(0))>0$, then since the Green function is harmonic in $B$ except the
singular point $\bar x$, we have that $\nabla G\cdot \nu_{B_1(0)}=0$
and then
\[
d\nu^G_B = (1+\chi_{B_1(0)})(\nabla G \cdot \nu_B) d{\cal H}^{n-1}\res \partial B
=(\nabla G \cdot \nu_B) d|D\chi_B|
\]
On the other hand, if we take $\Om=\Rn\setminus \overline{B_1}(0)$,
then every ball is regular. This is due to the fact that in this paper
regularity is a notion of inner regularity. If one changes the notion
to outer regularity or to two-sided regularity, then things change.}
\end{example}

\noindent Addresses:\\

\noindent N.M.: Department of Mathematics and Statistics, University of Helsinki, \\
P.O. Box 68 (Gustaf H\"allstr\"omin katu 2b), FI-00014 University of Helsinki, Finland. \\
\noindent E-mail: {\tt niko.marola@helsinki.fi} \\

\noindent M.M.: Department of Mathematics and Computer Science, University of Ferrara, \\ 
via Machiavelli 35, 44121, Ferrara, Italy. \\
\noindent E-mail: {\tt michele.miranda@unife.it} \\

\noindent N.S.: Department of Mathematical Sciences, University of Cincinnati, \\
P.O.Box 210025, Cincinnati, OH 45221--0025, USA. \\
\noindent E-mail: {\tt shanmun@uc.edu} \\

\end{document}